\documentclass[12pt,twoside]{amsart}
\usepackage{mathrsfs}
\usepackage{mathtools}
\usepackage{amssymb}
\usepackage{verbatim}
\usepackage{amsmath}
\usepackage{comment}
\usepackage{amsthm,thmtools,xcolor}
\usepackage[colorlinks,linkcolor=blue,citecolor=blue, pdfstartview=FitH]
{hyperref}
\usepackage{bm}
\usepackage{a4wide}
\usepackage[latin1]{inputenc}
\usepackage[T1]{fontenc}
\usepackage{times}
\usepackage{hyperref}
\usepackage{amssymb,latexsym}
\usepackage{enumerate}

\usepackage{etoolbox}
\makeatletter
\patchcmd\maketitle
  {\uppercasenonmath\shorttitle}
  {}
  {}{}
\patchcmd\maketitle
  {\@nx\MakeUppercase{\the\toks@}}
  {\the\toks@}
  {}
  {}{}
\patchcmd\@settitle
  {\uppercasenonmath\@title}
  {}
  {}{}
\patchcmd\@setauthors
  {\MakeUppercase{\authors}}
  {\authors}
  {}{}

\def\Re{{\rm Re}}
\def\Im{{\rm Im}}

\makeatletter
\newcommand{\sumprime}{\if@display\sideset{}{'}\sum%
            \else\sum'\fi}
\makeatother

\begin{document}

\numberwithin{equation}{section}

\newtheorem{theorem}{Theorem}[section]
\newtheorem{proposition}[theorem]{Proposition}
\newtheorem{conjecture}[theorem]{Conjecture}
\def\theconjecture{\unskip}
\newtheorem{corollary}[theorem]{Corollary}
\newtheorem{lemma}[theorem]{Lemma}
\newtheorem{observation}[theorem]{Observation}
\newtheorem{definition}{Definition}
\numberwithin{definition}{section} 
\newtheorem{remark}{Remark}
\def\theremark{\unskip}
\newtheorem{kl}{Key Lemma}
\def\thekl{\unskip}
\newtheorem{question}{Question}
\def\thequestion{\unskip}
\newtheorem{example}{Example}
\def\theexample{\unskip}
\newtheorem{problem}{Problem}

\address{Franz Luef, Johannes Testorf, Xu Wang: Department of Mathematical Sciences, Norwegian University of Science and Technology, Trondheim, Norway}

\email{franz.luef@ntnu.no, johannes.testorf@ntnu.no, xu.wang@ntnu.no}

\title[Transcendentality condition for Gaussian Gabor frames]{On the transcendentality condition for Gaussian Gabor frames and Hermite super/multiwindow frames}

 \author{Franz Luef, Johannes Testorf, Xu Wang}
\date{\today}

\begin{abstract} We give a criterion for higher-dimensional Gaussian Gabor frames, which is a reformulation of one of the main results in \cite[Thm 1.1]{LW} in more explicit terms. We use this formulation in order to extend the result of \cite{RUZ} to lattices given by irrational rotations. We also show that this density criterion for Gaussian Gabor frames is generic in a certain sense. In addition, we also extend the methods of \cite{LW} to the pseudoeffective threshold which gives a condition for uniqueness in the Bargmann-Fock space. We also use this viewpoint to study super and multi-window Gabor frames with Hermitian windows. In particular, we find a density criterion for transcendental lattices.
\end{abstract}

\maketitle

\section{Introduction}
We continue the research on Gaussian Gabor frames using methods from K\"ahler geometry, initiated in \cite{LW}, and extend it the study of Hermite multi-window/super Gabor frames by results on jet interpolation in the Bargmann-Fock spaces.

Let us recall the setting of \cite{LW}. We take a Gaussian $g_\Omega(t):=\overline{e^{\pi it^T\Omega t}}$, where $\Omega\in \mathfrak{h}$ is an element of the Siegel upper half space
$$
\mathfrak{h}:=\{\Omega\in \mathfrak{gl}(n,\mathbb C): \Omega = \Omega^T,\, \mathrm{Im}\,\Omega \text{ is positive definite}\},
$$
a lattice $\Gamma \subset \mathbb R^{2n}$, and associate to these the Gaussian Gabor system $\{\pi_\lambda g_\Omega\}_{\lambda\in\Lambda}$ as
$$
\pi_\lambda g_\Omega(t):=e^{2\pi i \xi^Tt}g_\Omega(t-x),\quad \lambda = (\xi,x)\in \Lambda, \quad\xi^Tt:=\sum\limits_{j=1}^n\xi_jt_j.
$$
Then $\{\pi_\lambda g_\Omega\}_{\lambda\in\Lambda}$ is called a Gaussian Gabor frame if there exist constants $A,B>0$ such that
$$
A\|f\|^2\le\sum\limits_{\lambda\in \Lambda}|(f,\pi_\lambda g_\Omega)|^2\le B\|f\|^2 \quad \text{for } f\in L^2(\mathbb R^{n}),
$$
where $(\cdot,\cdot)$ is the $L^2$ inner product.

Multivariate Gaussian Gabor frames have recently been investigated by the first and the last author of this note \cite[Thm. 1.1]{LW}, where a density condition for a certain class of Gaussian Gabor frames is formulated for transcendental lattices $\Lambda$ in $\mathbb{R}^{2n}$. Recall that a lattice $\Gamma$ in $\mathbb C^n$ is said to be transcendental if the complex  torus $X:=\mathbb C^n/\Gamma$ is Campana simple, i.e. the only positive dimensional analytic subvariety of $X$ is $X$ itself.

We will generalize the results in the case of such a lattice when $\Omega=iI$ (By Proposition 1.4 in \cite{LW} we do not any lose generality in the Gaussian case.) to Hermite functions. We define these as follows:
\begin{definition}
    For $m\in \mathbb N_0$, we define the Hermite function of level $m$ by 
    $$
    h_m(t):=c_m e^{\pi |t|^2}\frac{d^m}{dt^m}\left(e^{-2\pi |t|^2} \right), \quad t\in \mathbb R
    $$
    where $c_m$ is a constant ensuring that the $L^2$ norm of the Hermite functions is 1.  
    \\~~\\
    For $\alpha\in \mathbb N_0^n$, we define the Hermite functions of level $\alpha$ by
    $$
    h_\alpha(t):=\prod\limits_{i=1}^n h_{\alpha_i}(t_i),\quad t\in \mathbb R^n.
    $$
\end{definition}
We also define the notions of super and multiwindow Gabor frames.
\begin{definition}
    Let $\Lambda \subset\mathbb R^{2n}$ be a lattice. The system 
    $(\mathbf{h}_s:=(h_\alpha)_{|\alpha|\le s},\Lambda)$ induces an s-super-Gabor frame, if there exist constants $A,B>0$ such that for any $f\in L^2(\mathbb R^n,\mathbb C^{\mathfrak{s} })$ where $\mathfrak s$ is the number of multiindices with $|\alpha|\le s$,
    $$
    A\|f\|^2\le \sum\limits_{\lambda\in\Lambda}\,\bigg|\sum\limits_{|\alpha|\le s}(f_\alpha,\pi_\lambda h_\alpha)\, \bigg|^2 \le B\|f\|^2.
    $$
    The system $(\mathbf{h}_s:=(h_\alpha)_{|\alpha|\le s},\Lambda)$, induces a multiwindow Gabor frame if there exist constants $A,B>0$ such that for every $f\in L^2(\mathbb R^n)$
    $$
    A\|f\|^2\le \sum\limits_{\lambda\in \Lambda}\sum\limits_{|\alpha|\le s}|(f,\pi_\lambda h_\alpha))|^2\le B\|f\|^2.
    $$
\end{definition}

Then our main result is the following generic density criterion for Gabor frames.
\begin{theorem}\label{th:mainthm} We have that: 
\begin{enumerate}
\item[(1)] $(\mathbf h_s, \Lambda)$ is a multiwindow-Gabor frame for $L^2(\mathbb R^n)$ if $|\Lambda| <\frac {(s+1)^n}{n!}$ and $\ker {\rm int}_{\Lambda_{\mathbb C}} =0$.

\item[(2)] $(\mathbf h_s, \Lambda)$ is a super Gabor frame for $L^2(\mathbb R^n, \mathbb C^{\mathfrak s})$ if $|\Lambda| <\frac{n!}{(n+s)^n}$ and $\ker {\rm int}_{\Lambda^\circ_{\mathbb C}} =0$.
\end{enumerate}
In particular, when $s=0$, both (1) and (2) will give a criterion for $(e^{-\pi |t|^2},\Lambda)$ to be a Gabor frame.
\end{theorem}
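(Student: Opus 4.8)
The plan is to transport both frame inequalities to the Bargmann--Fock space $\mathcal F(\mathbb C^n)$ and to recognise the Hermite multiwindow and super conditions as, respectively, a \emph{jet-sampling} and a \emph{jet-interpolation} inequality for holomorphic sections of a positive line bundle on the complex torus $X=\mathbb C^n/\Lambda_{\mathbb C}$, so that the $L^2$-machinery of \cite{LW} applies. By Proposition 1.4 in \cite{LW} we may assume $\Omega=iI$, and then the Bargmann transform $\mathcal B\colon L^2(\mathbb R^n)\to\mathcal F(\mathbb C^n)$ sends $e^{-\pi|t|^2}$ to the constant $1$, sends each Hermite window $h_\alpha$ to a constant multiple of the monomial $z^\alpha$, and intertwines $\pi_\lambda$ with the Fock shift by the complex point $w(\lambda)\in\mathbb C^n$ attached to $\lambda$. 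Consequently $(f,\pi_\lambda h_\alpha)$ becomes, up to a fixed Gaussian factor, the value of $\partial^\alpha(\mathcal Bf)$ at $w(\lambda)$, and the lattice periodicity lets $\mathcal Bf$ descend to a holomorphic section of a Hermitian line bundle $(L,e^{-\v})$ over $X$ whose curvature is the standard flat K\"ahler form.

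First I would treat the multiwindow case. After the transform, $\sum_{|\alpha|\le s}|(f,\pi_\lambda h_\alpha)|^2$ is comparable to the squared norm of the $s$-jet of $\mathcal Bf$ at $w(\lambda)$, so (1) is equivalent to the assertion that the $s$-jets of sections of $L$ at the lattice points sample $H^0(X,L)$ in the $L^2$ sense. I would establish the two frame bounds separately: the Bessel (upper) bound $B$ follows from standard summability estimates for jets of Fock functions, since the Hermite windows are Schwartz functions, while the lower bound $A$ is the substantial point and is obtained from a quantitative $\dbar$-estimate adapted to $s$-jets. Here the density hypothesis $|\Lambda|<\tfrac{(s+1)^n}{n!}$ is exactly the required positivity: the product structure $h_\alpha=\prod_i h_{\alpha_i}$ reduces the estimate to an order-$s$ jet in each variable separately, i.e.\ to $(s+1)^n$ conditions, and comparing this product jet with the simplex $\{|\alpha|\le s\}$ of the windows actually used accounts for the factor $\tfrac1{n!}$; geometrically the numerology $\tfrac{(s+1)^n}{n!}$ is the top self-intersection number of the order-$(s+1)$ twist of $L$ that measures the positivity available to the jet bundle.

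Next I would dualise. The super-frame inequality of (2) is the adjoint of a jet-\emph{interpolation} problem: one must extend prescribed $\mathbb C^{\mathfrak s}$-valued jet data at the lattice to a global section with controlled $L^2$-norm. The matching Ohsawa--Takegoshi-type $L^2$-extension, now under the interpolation density $|\Lambda|<\tfrac{n!}{(n+s)^n}$, yields the bounds. As a consistency check, in dimension $n=1$ the two thresholds are the exact reciprocals $s+1$ and $\tfrac1{s+1}$, which is precisely the Ron--Shen/Janssen duality one expects between multiwindow and super systems.

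The place where the transcendentality enters, and the main obstacle, is the passage from positivity to the \emph{uniform} lower bound. The $\dbar$-estimate only controls the $L^2$-defect of the jet map up to sections that vanish along some proper analytic subvariety of $X$; ruling these out is exactly what the hypothesis $\ker{\rm int}_{\Lambda_{\mathbb C}}=0$ (respectively $\ker{\rm int}_{\Lambda^\circ_{\mathbb C}}=0$) is designed to do, playing the role the Campana-simple condition plays in \cite{LW}. I expect the technical heart of the proof to be checking that this kernel condition indeed kills the relevant obstruction, so that the extension/solution operator is bounded uniformly over the lattice and the constants $A,B$ exist. Finally, specialising to $s=0$ collapses the jet to a single point evaluation, $\mathbf h_0=(e^{-\pi|t|^2})$, and both estimates reduce to a positivity criterion for $(e^{-\pi|t|^2},\Lambda)$, recovering the Gaussian Gabor frame statement.
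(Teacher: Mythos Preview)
Your overall picture---Bargmann transform, Hermite windows $\leftrightarrow$ jets, K\"ahler/$\dbar$ input on the torus---matches the paper, but two structural pieces are missing and your numerology is not the right one.

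First, the paper does \emph{not} prove the multiwindow lower frame bound directly by a $\dbar$-sampling estimate. It invokes Gr\"ochenig's ``frames without inequalities'' characterization to convert (1) into a pure \emph{uniqueness} statement for $s$-jets in $\mathcal F^\infty$, and then shows that uniqueness holds whenever the pseudoeffective threshold satisfies $\lambda_0(\mathbb C^n/\Lambda_{\mathbb C},\omega)<s+1$. Dually, (2) is not handled on $\Lambda$ at all: Ron--Shen duality turns the super-frame condition into a multiwindow \emph{Riesz sequence} on the symplectic dual $\Lambda^\circ$, i.e.\ into $s$-jet \emph{interpolation} on $\Lambda^\circ_{\mathbb C}$, which holds whenever the Seshadri constant satisfies $\epsilon_0(\mathbb C^n/\Lambda^\circ_{\mathbb C},\omega)>n+s$. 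This is why the hypothesis in (2) is on $\Lambda^\circ_{\mathbb C}$, a point your outline does not account for.

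Second, the thresholds $(s+1)^n/n!$ and $n!/(n+s)^n$ do not come from counting ``product jets versus simplex jets''; that heuristic does not produce these numbers. They come from the identity $\epsilon_0=\lambda_0=(n!\,|\Lambda|)^{1/n}$, valid precisely for transcendental tori: Demailly's approximation of $\omega$-PSH functions by ones with analytic singularities forces the singularities to be isolated when there are no intermediate subvarieties, which gives $\epsilon_0\ge\lambda_0$, and Demailly's mass-concentration gives $\epsilon_0=(n!\,|\Lambda|)^{1/n}$. Plugging this into $\lambda_0<s+1$ (resp.\ $\epsilon_0>n+s$ on the dual, using $|\Lambda^\circ|=|\Lambda|^{-1}$) yields exactly the stated density bounds. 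So the role of $\ker{\rm int}_\Gamma=0$ is not to ``kill sections vanishing on a subvariety'' in a $\dbar$-defect, but to force these two invariants to coincide with the volume expression; without that identification your scheme has no mechanism producing the specific constants.
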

The map $\mathrm{int}_{\Lambda_\mathbb C}$ is defined in (\ref{eq:sha}) and the vanishing of its kernel is equivalent to the transcendentality condition for $\Lambda$ and its symplectic dual as will be proved later on. 

We remark that this result not only generalizes the results of \cite{LW} but also the results of \cite{Abreu10}, as when $n=1$ this will reduce to precisely the result given there.

One of the main objects used in \cite{LW} to obtain the density criterion is the Seshadri constant. Here, we will investigate the related notion of the pseudoeffective threshold as well.  The proof of the above theorem relies on the characterization of these numbers give of sampling and uniqueness in the Bargmann-Fock space. In particular, we want to give a sort of time-frequency interpretation of these numbers. In order to add some detail to this interpretation we also generalize asymptotic formulae of Demailly for the Seshadri constant and pseudoeffective threshold in terms of interpolation and uniqueness to the Bargmann-Fock case.

As a consequence of our work in this paper, the duality theorem of Gabor frames can also be used to relate the pseudoeffective threshold and Seshadri constant on tori given by dual lattices, however, the relation this gives is very weak (in the transcendental case this is even weaker than $|\Lambda|\cdot|\Lambda^o|=1$), whether there is some method which can give a more intimate relation between these (in particular a sharper bound of the Seshadri constant in terms of the pseudoeffective threshold of the dual torus or vice versa) remains open.

We will also illustrate how the notion of the Seshadri constant and pseudoeffective threshold are connected to density for transcendental lattices and will give a characterization of transcendental lattices purely in terms of the lattice points and we will also demonstrate that the transcendentality criterion is a generic property.

By our reformulation of \cite[Thm. 1.1]{LW} in purely combinatorial terms for the lattice we are able to cover lattices outside the scope of the density conditions for bivariate Gaussian Gabor frames for product lattices established in \cite{RUZ}. More precisely, we are able to give a density condition for product lattices given by an irrational rotation (see Corollary \ref{cor:RUZ}).

\section{Pseudoeffective Thresholds and Gaussian Gabor Frames}
Recall that for a K\"ahler manifold $(X,\omega)$ the Seshadri constant at a point $x\in X$ is defined as 
    \begin{align*}
    \epsilon_x(X,\omega):=\sup\{\gamma \ge 0 : \exists\, \phi \in \mathrm{PSH}(X,\omega) \text{ such that $\phi = \gamma \log|z|^2+\mathcal{O}(1)$ near $x$}\},
\end{align*}
where z denotes a holomorphic coordinate chart around $x$ with $z(x)=0$ and $\mathrm{PSH}(X,\omega)$ denotes the the space of $\omega$-plurisubharmonic (PSH) functions, i.e. the space of functions $\phi$ on $X$ such that for every local potential $\psi$ of $\omega$ (i.e. $\omega= dd^c \psi$, $dd^c:= \tfrac{i}{2\pi} \partial\overline{\partial}$) $\phi+\psi$ is subharmonic on each holomorphic disk embedded in $X$. We similarly define the pseudoeffective threshold at $x$ by 
\begin{align*}
    \lambda_x(X,\omega):=\sup\{\gamma\ge 0:\exists \phi \in \mathrm{PSH}(X,\omega), \, \nu_x(\phi)\ge \gamma \},
\end{align*}
where $\nu_x(\phi)$ is the Lelong number at $x$ defined by
\begin{align*}
    \nu_x(\phi):=\liminf\limits_{z\to 0}\frac{\phi(z)}{\log|z|^2}.
\end{align*}

We shall first give a condition on lattices $\Lambda\subseteq\mathbb{R}^{2n}$ that give Gaussian Gabor frames $(g_\Omega,\Lambda)$ via the pseudoeffective threshold.
By Proposition 1.4 in \cite{LW}, it suffices to look at the $\Omega=i I$ case, i.e.
$$
g_\Omega(t)=e^{-\pi|t|^2}.
$$
In this case, by the Bargmann transform, we know that $(e^{-\pi|t|^2}, \Lambda)$ is a Gabor frame for $L^2(\mathbb R^n)$ if and only if 
$$
\Lambda_{\mathbb C}:=\{x+i\xi \in \mathbb C^n: (\xi, x)\in \Lambda\}
$$
defines a frame for the classical Bargmann-Fock space. Hence, by \cite[Prop. 2.1]{GL} we have the following fact:

\begin{proposition}\label{pr:GL} $(e^{-\pi|t|^2}, \Lambda)$ is a Gabor frame for $L^2(\mathbb R^n)$ if and only if $\Lambda_{\mathbb C}$ is a set of uniqueness for $\mathcal F^\infty$, i.e. if for every  holomorphic function $F$ on $\mathbb C^n$ we have  
$$
\sup_{z\in \mathbb C^n} |F(z)|^2 e^{-\pi|z|^2}\le1 \ \ \text{and} \ \ F|_{\Lambda_\mathbb C} =0,
$$
then $F\equiv 0$ on $\mathbb C^n$.
\end{proposition}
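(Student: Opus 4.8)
The plan is to deduce the statement by combining the Bargmann-transform picture already recalled above with the cited characterization \cite[Prop. 2.1]{GL}, so the argument is short once the dictionary between the two worlds is set up correctly.

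First I would make the intertwining precise. The Bargmann transform $\mathcal B$ is a unitary isomorphism from $L^2(\mathbb R^n)$ onto the Bargmann-Fock space $\mathcal F^2$, and a direct computation shows that for the Gaussian $g(t)=e^{-\pi|t|^2}$ one has $|(f,\pi_\lambda g)|^2 = c\,|F(z_\lambda)|^2 e^{-\pi|z_\lambda|^2}$, where $F=\mathcal B f$, $z_\lambda = x+i\xi$ is the point of $\Lambda_{\mathbb C}$ associated to $\lambda=(\xi,x)$, and $c>0$ is a fixed normalising constant independent of $\lambda$ and $f$. Summing over $\lambda\in\Lambda$ and using unitarity of $\mathcal B$, the Gabor frame inequalities for $(g,\Lambda)$ turn into the sampling inequalities
$$
A'\|F\|_{\mathcal F^2}^2 \le \sum_{\lambda\in\Lambda} |F(z_\lambda)|^2 e^{-\pi|z_\lambda|^2} \le B'\|F\|_{\mathcal F^2}^2, \qquad F\in\mathcal F^2,
$$
so that $(g,\Lambda)$ is a Gabor frame precisely when the coherent states indexed by $\Lambda_{\mathbb C}$ form a frame for $\mathcal F^2$, i.e. when $\Lambda_{\mathbb C}$ is a sampling set for $\mathcal F^2$. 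This is exactly the equivalence recalled just before the statement.

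Second, I would invoke \cite[Prop. 2.1]{GL}, which identifies the sampling sets of $\mathcal F^2$ with the sets of uniqueness of $\mathcal F^\infty$, the space of entire $F$ with $\sup_z |F(z)|^2 e^{-\pi|z|^2}<\infty$. Chaining the two equivalences then yields that $(g,\Lambda)$ is a Gabor frame if and only if $\Lambda_{\mathbb C}$ is a sampling set for $\mathcal F^2$, if and only if $\Lambda_{\mathbb C}$ is a set of uniqueness for $\mathcal F^\infty$, which is the assertion. The normalisation $\sup_z|F(z)|^2 e^{-\pi|z|^2}\le 1$ appearing in the statement is harmless, since the uniqueness property is unaffected by rescaling $F$ by a constant.

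The only genuinely nontrivial input is the second equivalence: passing from a quantitative two-sided sampling estimate in the Hilbert space $\mathcal F^2$ to the purely qualitative uniqueness property in the larger space $\mathcal F^\infty$ is the hard direction, and it is precisely what \cite[Prop. 2.1]{GL} supplies (ultimately resting on the fact that the critical sampling densities agree across the scale $\mathcal F^p$). Everything else is bookkeeping of the Bargmann transform and tracking the normalising constants, so I expect no real obstacle beyond getting that dictionary right.
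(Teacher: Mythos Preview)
Your proposal is correct and follows exactly the route the paper takes: the paper does not give a separate proof of this proposition but simply records, in the sentences immediately preceding it, that the Bargmann transform converts the Gabor frame condition into the frame/sampling condition for the Bargmann--Fock space, and then invokes \cite[Prop.~2.1]{GL} to pass to the uniqueness characterization in $\mathcal F^\infty$. Your write-up is just a more detailed rendering of that two-step dictionary, with the same cited input doing the real work.
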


On the other hand, one may use K\"ahler geometry methods to prove the special case of our main theorem.

\begin{proposition}\label{pr:PT}
    If $\lambda_0(\mathbb C^n/\Lambda_{\mathbb C},\omega)<1$, with $\omega$ being the lifting of the standard K\"ahler metric $dd^c\pi |z|^2$ of $\mathbb C^n$ to the torus, then the lattice $\Lambda_{\mathbb C}$ is a set of uniqueness for $\mathcal{F}^\infty$.
\end{proposition}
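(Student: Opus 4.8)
The plan is to argue by contraposition: I will show that if $\Lambda_{\mathbb C}$ fails to be a set of uniqueness for $\mathcal F^\infty$, then necessarily $\lambda_0(\mathbb C^n/\Lambda_{\mathbb C},\omega)\ge 1$. So assume there is a holomorphic $F\not\equiv 0$ with $\sup_z|F(z)|^2e^{-\pi|z|^2}\le 1$ and $F|_{\Lambda_{\mathbb C}}=0$. The natural object to build from $F$ is the quasi-plurisubharmonic function $u:=\log|F|^2-\pi|z|^2$ on $\mathbb C^n$: the bound on $F$ forces $u\le 0$, while the Poincar\'e--Lelong formula gives $dd^c u+\omega_0=[D_F]\ge 0$, where $\omega_0=dd^c\pi|z|^2$ is the translation-invariant lift of $\omega$ and $[D_F]$ is the integration current along the zero divisor of $F$. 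Because $F$ vanishes on all of $\Lambda_{\mathbb C}$, every lattice point lies on $D_F$, so $\nu_\gamma(u)\ge 1$ for each $\gamma\in\Lambda_{\mathbb C}$.

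The obstruction is that $u$ is not $\Lambda_{\mathbb C}$-periodic and hence does not descend to $X$. I would remedy this by periodizing through an average over lattice translates,
$$w_R:=\frac{1}{N_R}\sum_{\gamma\in\Lambda_{\mathbb C}\cap B_R}\tau_\gamma^*u,\qquad N_R:=\#(\Lambda_{\mathbb C}\cap B_R),$$
with $\tau_\gamma$ translation by $\gamma$ and $B_R$ the Euclidean ball. Since $\omega_0$ is translation invariant, each $w_R$ is $\omega_0$-plurisubharmonic with $S_R:=\omega_0+dd^c w_R=\frac{1}{N_R}\sum_\gamma\tau_\gamma^*[D_F]\ge 0$, and $w_R\le 0$. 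Two features are built in: Lelong numbers add under convex combinations, so $\nu_0(w_R)=\frac{1}{N_R}\sum_\gamma\mathrm{ord}_\gamma F\ge 1$ for every $R$; and an averaging estimate shows $\tau_{\gamma_0}^*w_R-w_R$ is, in relative mass, concentrated within $O(1/R)$ of $\partial B_R$, so any $L^1_{\mathrm{loc}}$-limit $w$ of the $w_R$ is $\Lambda_{\mathbb C}$-periodic and descends to some $\phi\in\mathrm{PSH}(X,\omega)$.

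The main obstacle is to guarantee that these averaged potentials do not collapse to $-\infty$, i.e. that a nontrivial limit $\phi$ exists. This is exactly where the Fock bound $u\le 0$ is essential. By Stokes' theorem $\int_{B_R}[D_F]\wedge\omega_0^{n-1}=\int_{B_R}\omega_0^n+\int_{\partial B_R}d^c u\wedge\omega_0^{n-1}$, and a Jensen/Nevanlinna-type estimate coming from the growth bound on $F$ controls the boundary term, which forces the zero divisor $D_F$ to share the asymptotic density of $\omega_0$. This density match is what keeps the fundamental-domain averages $\int_{X_0}w_R$ bounded below, giving relative compactness of $\{w_R\}$ in $L^1_{\mathrm{loc}}$ and a limit $w\not\equiv-\infty$. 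I expect proving this non-degeneracy (essentially an equidistribution statement for the zeros of $F$) to be the technical heart of the argument.

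It then remains to transport the Lelong estimate to the limit. Monotonicity of the normalized mass gives $\sigma_{S_R}(B_r)\ge c_n r^{2n-2}\,\nu_0(S_R)\ge c_n r^{2n-2}$ for all small $r$; passing to the weak limit $S:=\omega_0+dd^c w=\lim_R S_R$ and applying the portmanteau inequality on the closed balls $\overline{B_r}$ yields $\sigma_S(\overline{B_r})\ge c_n r^{2n-2}$, whence $\nu_0(\phi)=\nu_0(S)\ge 1$. Thus $\lambda_0(X,\omega)\ge 1$, contradicting the hypothesis, and $\Lambda_{\mathbb C}$ is a set of uniqueness for $\mathcal F^\infty$.
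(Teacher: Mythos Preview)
Your contrapositive setup is the same as the paper's, but from that point on you take a much harder road. The paper avoids the entire averaging/compactness machinery by a single observation: the family of competitors $\{\log(|F|^2e^{-\pi|z|^2}):F\in\mathcal O(\mathbb C^n),\ \|F\|_{\mathcal F^\infty}=1,\ F|_{\Lambda_{\mathbb C}}=0\}$ is already invariant under lattice translation. Indeed, for $\lambda\in\Lambda_{\mathbb C}$ one has
\[
|F(z+\lambda)|^2e^{-\pi|z+\lambda|^2}=|\tilde F(z)|^2e^{-\pi|z|^2},\qquad \tilde F(z):=F(z+\lambda)\,e^{-\pi z\bar\lambda-\tfrac{\pi}{2}|\lambda|^2},
\]
and $\tilde F$ is again an admissible competitor. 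Hence the upper envelope $G:=\sup^*\log(|F|^2e^{-\pi|z|^2})$ is $\Lambda_{\mathbb C}$-periodic on the nose, descends to an $\omega$-psh function on the torus, is not identically $-\infty$ (any single nonzero $F$ already prevents that), and has Lelong number $\ge 1$ at the origin. That is the whole proof.

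Your averaging scheme, by contrast, carries a genuine gap at exactly the place you flagged. You need the $L^1_{\mathrm{loc}}$ averages of $w_R$ over a fundamental domain to stay bounded below, but the summands $\tau_\gamma^*u$ are only known to satisfy $\tau_\gamma^*u\le 0$, with no uniform lower bound. The Nevanlinna-type estimate you invoke controls the mass of $[D_F]$ in large balls, which is one integration away from what you need; converting divisor mass into a lower bound on averages of $\log|F|^2-\pi|z|^2$ requires a quantitative Jensen/Green argument that you have not supplied and that is not automatic (in particular, extra zeros of $F$ beyond $\Lambda_{\mathbb C}$ can contribute large negative mass). The same unboundedness from below also undermines the periodicity step: the symmetric difference $(\gamma_0+\Lambda_{\mathbb C}\cap B_R)\,\triangle\,(\Lambda_{\mathbb C}\cap B_R)$ has only $O(N_R/R)$ points, but each term $\tau_\gamma^*u$ in that boundary layer may be arbitrarily negative, so the difference $\tau_{\gamma_0}^*w_R-w_R$ is not obviously $o(1)$ in $L^1_{\mathrm{loc}}$ without the very estimate you are missing. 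In short, the argument is circular unless the non-degeneracy is established independently. The envelope trick renders all of this unnecessary.
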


\begin{proof}
If $\Lambda_{\mathbb C}$ is not a set of uniqueness for $\mathcal F^\infty$,  then the following $\omega$-PSH function
\begin{align*}
G(z):=\sup^*\left\lbrace \log(|F(z)|^2 e^{-\pi|z|^2}): F\in \mathcal O(\mathbb C^n), \sup_{z\in \mathbb C^n} |F(z)|^2 e^{-\pi|z|^2}=1, \ F|_{\Lambda_\mathbb C} =0\right\rbrace
\end{align*}
is not identically equal to $-\infty$, where $\mathcal O(\mathbb C^n)$ denotes the space of holomorphic functions on $\mathbb C^n$, $\sup^*$ denotes the upper semicontinuous regularization of the supremum. Note that $G$ is $\Lambda_{\mathbb C}$-invariant. 
\\~\\
To see this, consider a point $z \in \mathbb{C}^n$, a lattice point $\lambda \in \Lambda_\mathbb{C}$, and a holomorphic function $F$ which is a suitable candidate for $G$. Then we have
$$
|F(z+\lambda)|^2e^{-\pi|z+\lambda|^2}= |F(z+\lambda)e^{-\pi z\overline{\lambda}-\frac{\pi|\lambda|^2}{2}}|^2e^{-\pi |z|^2}.
$$
Let us define
$$
\tilde{F}(z):= F(z+\lambda)e^{-\pi z\overline{\lambda}-\frac{\pi|\lambda|^2}{2}},
$$
we have that $\tilde{F}$ is also a candidate for $G$. This means that $G(z)$ is no less than $G(z+\lambda)$, which implies $\Lambda_\mathbb C$-invariance. Thus, $G$ induces a function on the torus. Since by definition, $G$ will have Lelong number of at least one on the lattice point, thus the statement is proved.
\end{proof}

\section{Jet Interpolation and Uniqueness in the Bargmann-Fock Space}
Here we will characterize the pseudoeffective threshold in terms of uniqueness sets for jets of holomorphic functions. We will follow what was laid out in \cite{Dem08} for line bundles, and show a similar result for the Bargmann-Fock space with respect to a lattice. 

We will mimic tensor powers of complex line bundles via the spaces 
\begin{align*}
    \mathcal{F}^\infty_k:=\{F\in \mathcal O(\mathbb C^n) : \|F\|_{\mathcal{F}^\infty_k} <\infty\},\quad k\in \mathbb R_{>0},
\end{align*}
 where 
\begin{align*}
    \|F\|_{\mathcal{F}^\infty_k}:=\sup\limits_{z\in\mathbb C^n}|F(z)|^2e^{-\pi k |z|^2}.
\end{align*}

We say that the lattice $\Lambda_\mathbb C$ is a set of uniqueness for $s$-jets of $\mathcal{F}^\infty_k$ if for every $F\in\mathcal{F}^\infty_k$ with
\begin{align*}
    \left.\frac{\partial^\alpha}{\partial z^\alpha}F(z)\right|_{\Lambda_\mathbb C} =0, \quad \alpha\in \mathbb N_0^n, \, |\alpha|\le s,
\end{align*}
we have $F\equiv 0$ on $\mathbb C^n$. 
\\~~\\
We can thus introduce the uniqueness numbers associated to the lattice $\Lambda_\mathbb C$ as 
$$
\mu_k(\Lambda_\mathbb{C}):=\sup\{s+1\in \mathbb N_0 : \Lambda_\mathbb C \text{ is not a set of uniqueness for s-jets for $\mathcal{F}^\infty_k$}\}.
$$
Since there are no $(-1)$-jets, we always have $\mu_k\ge 0$. By generalizing the proof of Proposition \ref{pr:PT} to $G$ defined by
\begin{align*}
G(z):=\sup^*\left\lbrace \log(|F(z)|^2 e^{-\pi k|z|^2}):F\in \mathcal O(\mathbb C^n), \|F\|_{\mathcal{F}^\infty_k} =1, \ \left.\frac{\partial^\alpha F}{\partial z^\alpha}\right|_{\Lambda_\mathbb C} =0, |\alpha|\le \mu_k \right\rbrace,
\end{align*}
we may see that by linearity of the pseudoeffective threshold with respect to the metric, we have 
\begin{align}\label{eq:PSTI}
\lambda_0(\mathbb C^n/\Lambda_\mathbb C,\omega) \ge \frac{\mu_k(\Lambda_\mathbb{C})}{k}, \quad k\in \mathbb R_{>0}.
\end{align}
\medskip

\noindent
\textbf{Remark.}  The space $\mathcal{F}^\infty_k$ is the $L^\infty$ Bargmann-Fock space associated to the matrix $ikI\in \mathfrak h$. By considerations which can be found in \cite{LW}, this can also be interpreted as multiplying the lattice $\Lambda_\mathbb C$ by the factor $\sqrt{k}$ rather than a change of the weight. Thus, $\mu_k(\Lambda_\mathbb{C}) = 0$, is equivalent to $\sqrt{k}\Lambda_\mathbb C$ being a set of uniqueness for the Bargmann-Fock space. Furthermore, the asymptotic behavior in what follows may be interpreted in terms of rescaling of a given lattice.
\\~~\\
One can show that the inequality (\ref{eq:PSTI}) is asymptotically an equality. That is the following.

\begin{lemma}
    For any complex lattice $\Lambda_\mathbb C$ we have that 
    $$
    \lambda_0(\mathbb C^n/\Lambda_\mathbb C,\omega) = \lim\limits_{k\to \infty} \frac{\mu_k(\Lambda_\mathbb{C})}{k}.
    $$
\end{lemma}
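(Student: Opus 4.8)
The plan is to prove the two inequalities separately. The inequality (\ref{eq:PSTI}) already gives $\mu_k(\Lambda_\mathbb C)/k \le \lambda_0(\mathbb C^n/\Lambda_\mathbb C,\omega)$ for every $k$, hence $\limsup_{k\to\infty}\mu_k/k \le \lambda_0$. So the real content is the reverse bound $\lambda_0 \le \liminf_{k\to\infty}\mu_k/k$. To get it I would fix an arbitrary $\gamma<\lambda_0$ and produce, for all large $k$, a nonzero $F\in\mathcal F^\infty_k$ vanishing to order at least $(k-1)\gamma-n$ at every lattice point. Such an $F$ witnesses $\mu_k\ge (k-1)\gamma-n$; letting $k\to\infty$ gives $\liminf_k\mu_k/k\ge\gamma$, and then $\gamma\uparrow\lambda_0$ finishes.

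First I would set up the weight. By definition of $\lambda_0$ there is $\phi\in\mathrm{PSH}(X,\omega)$, $X=\mathbb C^n/\Lambda_\mathbb C$, with $\nu_{[0]}(\phi)\ge\gamma$. Lifting $\phi$ to a $\Lambda_\mathbb C$-periodic function $\tilde\phi$ on $\mathbb C^n$, periodicity yields $\nu_\lambda(\tilde\phi)\ge\gamma$ at every $\lambda\in\Lambda_\mathbb C$, and since $\omega=dd^c\pi|z|^2$ the function $u:=\tilde\phi+\pi|z|^2$ is plurisubharmonic. I would then work with
\[
\Psi_k:=(k-1)\tilde\phi + k\pi|z|^2,\qquad dd^c\Psi_k=(k-1)(\omega+dd^c\phi)+\omega\ \ge\ \omega\ >\ 0 ,
\]
the strict positivity being exactly the curvature needed to run the $L^2$ theory, while the singular part $(k-1)\tilde\phi$ still carries Lelong number $(k-1)\gamma$ at each lattice point.

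Next comes the construction. Choosing a generic point $p\notin\Lambda_\mathbb C$ with $\tilde\phi(p)>-\infty$, I would invoke the Ohsawa--Takegoshi extension theorem (equivalently H\"ormander's estimate with an added pole $2n\log|z-p|$ to pin the value) to obtain $F\in\mathcal O(\mathbb C^n)$ with $F(p)=1$ and $\int_{\mathbb C^n}|F|^2e^{-\Psi_k}\,dV<\infty$. This single finiteness forces the vanishing at all lattice points at once: near each $\lambda$ one has $e^{-(k-1)\tilde\phi}\gtrsim|z-\lambda|^{-2(k-1)\gamma}$, so local integrability of $|F|^2e^{-\Psi_k}$ gives $\mathrm{ord}_\lambda F>(k-1)\gamma-n$. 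Finally, since $\tilde\phi$ is bounded above on the compact torus, the weighted bound implies $F$ lies in the $L^2$ Fock space, and the reproducing-kernel estimate $|F(z)|^2e^{-\pi k|z|^2}\le C_{n,k}\|F\|^2$ upgrades this to $F\in\mathcal F^\infty_k$ with no loss in $k$. Thus $F$ is the desired nonzero jet-vanishing element.

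The only real work is this construction, and I expect the main obstacle to be orchestrating it so that three requirements hold simultaneously: the weight must be strictly positively curved yet still singular of order $(k-1)\gamma$ at every $\lambda\in\Lambda_\mathbb C$; the solution must be guaranteed nonzero (handled by prescribing $F(p)=1$ through the logarithmic pole); and the loss must be only $O(n/k)$, which is precisely the content of the integrability/Lelong-number estimate $\mathrm{ord}_\lambda F>(k-1)\gamma-n$. The passage from $L^2$ to $\mathcal F^\infty_k$, which might look as though it costs a factor in $k$, is in fact free because of the pointwise bound from the Fock reproducing kernel; this is the step where the specific geometry of $\mathcal F^\infty_k$ is used, and it is what lets one stay on $\mathbb C^n$ rather than passing to the compact torus --- essential here, since for transcendental $\Lambda_\mathbb C$ the torus carries no positive line bundle and hence no theta functions to work with.
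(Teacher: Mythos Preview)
Your proposal is correct and follows essentially the same strategy as the paper: use the $\omega$-PSH function realizing the Lelong number $\gamma$ as a singular weight, produce via $L^2$ methods a nonzero holomorphic function $F$ whose weighted $L^2$ norm is finite, and read off the vanishing order on $\Lambda_{\mathbb C}$ from integrability against the singular weight.

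Your implementation is somewhat leaner than the paper's. The paper takes a \emph{strictly} $\omega$-PSH $\psi$, uses the full weight $k(\pi|z|^2+\psi)$, performs local Ohsawa--Takegoshi extensions on neighborhoods $U'_\lambda$ of every shifted lattice point $z_0+\lambda$, and then glues these with a global H\"ormander $\bar\partial$-argument using additional logarithmic poles $\sum_\lambda 2n\chi\log|z-z_0-\lambda|$ to pin the values. You instead keep one copy of $\omega$ in reserve by writing $\Psi_k=(k-1)\tilde\phi+k\pi|z|^2$ (so $dd^c\Psi_k\ge\omega$ without needing strict $\omega$-plurisubharmonicity), and you do a \emph{single} Ohsawa--Takegoshi extension from one point $p$ to guarantee $F\not\equiv 0$. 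Both routes yield $\mu_k\ge k\gamma-O(n)$ and hence the same limit; yours avoids the local-to-global patching and the separate need for strict positivity, at the cost of the slightly weaker vanishing order $(k-1)\gamma-n$ rather than the paper's claimed $\lfloor k\gamma\rfloor$, which is immaterial in the limit.
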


\begin{proof}
    Our proof is very much in the spirit of the one Demailly gives in \cite{Dem08}. The main difference is that we lose compactness of our base manifold. 
		\\~~\\
		To prove the statement, we must find that for any $k$ sufficiently large, there exists a function $F\in\mathcal F^\infty_k$ whose derivatives of sufficiently large order vanish on $\Lambda_\mathbb C$. To this end, choose a number $\gamma\in [0,\lambda_0(\mathbb C^n/\Lambda_\mathbb C,\omega))$. By definition there exists a negative strictly $\omega$-PSH function (means $dd^c (\cdot) +\omega$ is strictly positive), say, $\psi$ on $\mathbb C^n/\Lambda_\mathbb C$ which has Lelong number of at least $\lambda$ at the lattice point (later we sill identify $\psi$ with its pull back to $\mathbb C^n$).
		\\~~\\
		Now choose $U \Subset U'\subseteq \mathbb C^n/\Lambda_\mathbb C$ two sufficently small neighborhoods of the lattice point, and a smooth function $\theta$ which is equal to one on the closure of $U$ and equal to zero outside $U'$. We denote by $\pi$ the projection map from $\mathbb C^n$ to the torus, and the leaves of $\pi^{-1}(U')$ by $(U'_\lambda)_{\lambda\in \Lambda_\mathbb C}.$  We define the sets $U_\lambda$ similarly. Fix a point $z_0\in U_0$ outside the singular locus of $\psi$, let us choose a sequence $(a_{k,\lambda})_{\lambda \in \Lambda_\mathbb C}$ such that
		$$
		\sum\limits_{\lambda \in \Lambda_\mathbb C} |a_{k,\lambda}|^2 e^{-\pi k |\lambda+z_0|^2-k\psi(\lambda+z_0)} = 1.
		$$
		By the Ohsawa-Takegoshi extension theorem, there exists a holomorphic function $f_\lambda$ on each $U_\lambda'$ such that $f_\lambda(z_0+\lambda) = a_{k,\lambda},$ and the estimate 
		$$
		\int\limits_{U_\lambda'}|f_\lambda|^2e^{-\pi k |z|^2-\psi(z)}\le C_1|a_{k,\lambda}|^2e^{-\pi k |\lambda+z_0|^2-k\psi(\lambda+z_0)},
		$$
		is fulfilled for some $C_1 = C_1(n,U').$ 
		\\~~\\
		In order to gain a global holomorphic function, we choose a cutoff function $\theta$ around the lattice point in the torus and solve the equation 
		$$
		\overline{\partial}g = \overline\partial f, \  \ \ f:= \left((\theta\circ \pi)\cdot \sum\limits_{\lambda\in \Lambda_\mathbb C} 1_{U_\lambda'}f_\lambda\right)
		$$
		using the Andreotti-Versentini-H\"ormander estimates. We may do this since all that is required from our manifold for this is weak pseudoconvexity which we have for $\mathbb C^n$. 
		We solve the equation with respect to the metric 
		$$
		\phi(z):=\pi k |z|^2+k\psi + \sum_{\lambda\in \Lambda_\mathbb C}\left( 2n\chi(|z-z_0-\lambda|) \log |z-z_0-\lambda|\right),
		$$
		where $\chi$ is a cuttoff function on $\mathbb R_+$ which is equal to $1$ near $0$ and vanishes for values greater than the injectivity radius of $z_0$ in the torus.
		\\~~\\
		In order to apply the Andreotti-Versentini-H\"ormander estimates, we require that the curvature of the weight function is larger than $\varepsilon \omega$ for some $\varepsilon >0$. This is fulfilled for $k$ sufficiently large since $\psi$ has been defined to be strictly $\omega$-PSH, and since the curvature of the metric lifts to the torus, which is compact, we may choose the lower bound for the $k$ which fulfill this condition uniformly with respect to $z_0$. Note that the precise choice of $\chi$ will influence how large we must choose $k$. Thus the $L^2$ estimates give
    $$
		\int\limits_{\mathbb C^n}|g|^2e^{-\phi}\le C_2\sum\limits_{\lambda\in \Lambda}\int\limits_{U_\lambda'}\left|\overline\partial (\theta\cdot f_\lambda)\right|^2 e^{-\phi}.
		$$
    Since $\phi$ has a singularity at $z_0+\lambda$ for every $\lambda \in \Lambda_\mathbb C$, we can see that $g(z_0+\lambda)=0$ and if the $U$ was chosen small enough we get for 
   $$F:=f-g,$$
		that
		\begin{align*}
			\int\limits_{\mathbb C^n}|F|^2e^{-\pi k |z|^2}\le C_3\sum\limits_{\lambda\in \Lambda} \int\limits_{U_\lambda'}|f_\lambda|^2e^{-\pi k |z|^2-k\psi(z)}\le C_1C_3<\infty.
		\end{align*}
		The fact that the integral on the LHS is finite implies that $F$ lies in the Bargmann-Fock space associated to $ikI$, which implies by (see e.g. \cite{Fol}) that $F$ lies in $\mathcal{F}^\infty_k$. $F$ also has vanishing order of at least $\lfloor k\gamma \rfloor$. Thus we have that
		$$
		\mu_k(\Lambda_\mathbb C)\ge\lfloor k\gamma \rfloor.
		$$
		Since $\gamma$ can be chosen arbitrarily close to the pseudoeffective threshold we are done.
\end{proof}

\noindent
We have a similar characterization of the Seshadri constant in terms of interpolation. The case of a line bundle over a compact manifold was shown in \cite{Dem-sin}. We say that the lattice $\Lambda_\mathbb C$ interpolates $s$-jets in $\mathcal{F}^2_k$ if for any sequence of complex numbers $(a_{\alpha,\lambda})_{\lambda \in \Lambda_\mathbb C, \alpha\in N_s}$ with 
$$
\sum\limits_{\lambda\in\Lambda_\mathbb C, \alpha\in N_s}|a_{\alpha,\lambda}|^2e^{-\pi k |\lambda|^2}=1,
$$
there exists an entire function $F$ with 
$$
\|F\|_{\mathcal{F}^2_k}:=\int_{\mathbb C^n} |F|^2e^{-k\pi |z|^2} <\infty,
$$
such that 
$$
e^{\pi k|z|^2}\frac{d^\alpha }{dz^\alpha}(F(z)e^{-\pi k|z|^2})|_{z=\lambda} = a_{\alpha,\lambda}, \quad \alpha\in N_s, \lambda\in \Lambda_\mathbb C.
$$
The index set $N_s$ is defined by $N_s:=\{\alpha\in \mathbb N^d_0 : |\alpha|\le s\}.$ 
\\~~\\
Similarly to the uniqueness numbers, we define the interpolation numbers associated to the lattice $\Lambda_\mathbb C$ by
$$
\sigma_k(\Lambda_\mathbb C):=\sup\{s\in\mathbb N_0 : \Lambda_\mathbb C \text{ interpolates s-jets in $\mathcal F^2_k$}\},
$$
if $\Lambda_\mathbb C$ does not interpolate in the space $\mathcal{F}^2_k$, we set $\sigma_k$ to -1.
\\~~\\
Analogously to before, if $\Lambda_\mathbb C$ is a set of s-jet interpolation for $\mathcal F^2_k$, then the envelope
$$
\sup^*\left\lbrace \log(|F(z)|^2 e^{-\pi k|z|^2}):F\in \mathcal{O}(\mathbb C^n), \|F\|_{\mathcal{F}^2_k} =1, \ \left.\frac{\partial^\alpha F}{\partial z^\alpha}\right|_{\Lambda_\mathbb C} =0, |\alpha|< \sigma_k(\Lambda_\mathbb C) \right\rbrace,
$$
has isolated logarithmic singularities with Lelong number $\sigma_k(\Lambda_\mathbb C)$ on the lattice since for any direction out of a given lattice point we may find a jet of level $\sigma_k(\Lambda_{\mathbb C})$ which is nonzero in that direction. The envelope also descends to the torus similarly to the envelopes we have previously studied. Thus we have
$$
\epsilon_0(\mathbb C^n/\Lambda_\mathbb C,\omega) \ge \frac{\sigma_k(\mathbb C^n/\Lambda_\mathbb C,\omega)}{k}.
$$

Like with the pseudoeffective threshold, we can show equality in the limit.
\begin{lemma}
    For any complex lattice $\Lambda_\mathbb C$ we have that 
    $$
   \epsilon_0(\mathbb C^n/\Lambda_\mathbb C,\omega) = \lim\limits_{k\to \infty} \frac{\sigma_k(\Lambda_\mathbb{C})}{k}.
    $$
\end{lemma}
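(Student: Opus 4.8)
The plan is to establish the two one-sided bounds $\limsup_k \sigma_k/k \le \epsilon_0$ and $\liminf_k \sigma_k/k \ge \epsilon_0$, where $\epsilon_0 := \epsilon_0(\mathbb C^n/\Lambda_\mathbb C,\omega)$; together they force the limit to exist and to equal $\epsilon_0$. The first bound is immediate: the inequality $\epsilon_0 \ge \sigma_k(\Lambda_\mathbb C)/k$ was already obtained for every $k$ from the jet envelope, so passing to $\limsup$ gives $\limsup_k \sigma_k/k \le \epsilon_0$. All the content therefore lies in the reverse bound, which I would prove by the Ohsawa--Takegoshi/H\"ormander construction used in the preceding lemma, now adapted to \emph{realize prescribed jets} rather than to force vanishing at a shifted point. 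Fix $\gamma < \epsilon_0$; it suffices to show that for all large $k$ the lattice interpolates $s$-jets in $\mathcal F^2_k$ whenever $s < k\gamma - n$.

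The first step is to produce a strictly $\omega$-PSH potential with a controlled pole. Choosing $\gamma < \gamma' < \epsilon_0$ and an $\omega$-PSH $\phi'$ with $\phi' = \gamma'\log|z|^2 + \mathcal O(1)$ near the lattice point, the rescaled function $\phi := (\gamma/\gamma')\phi'$ satisfies $dd^c\phi + \omega \ge (1-\gamma/\gamma')\,\omega =: \delta\omega > 0$ while retaining the pole $\gamma\log|z|^2$, hence Lelong number $\gamma$ at the lattice point. Normalizing $\sup_X\phi = 0$ and pulling back to $\mathbb C^n$ gives a $\Lambda_\mathbb C$-periodic weight $\phi \le 0$ with Lelong number $\gamma$ at every $\lambda \in \Lambda_\mathbb C$ and with $dd^c(\pi k|z|^2 + k\phi) \ge k\delta\omega$. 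Given data $(a_{\alpha,\lambda})$ normalized by $\sum_{\alpha,\lambda}|a_{\alpha,\lambda}|^2 e^{-\pi k|\lambda|^2} = 1$, I would, exactly as before, build on each leaf $U'_\lambda$ a local holomorphic $f_\lambda$ whose twisted $s$-jet at $\lambda$ equals $(a_{\alpha,\lambda})_{|\alpha|\le s}$ (a Taylor polynomial in the leaf coordinate suffices, since the passage between twisted and ordinary jets is triangular and invertible), with the normalization arranged so that $\sum_\lambda \int_{U'_\lambda}|f_\lambda|^2 e^{-\pi k|z|^2} \lesssim 1$. Patching with the periodic cutoff $\theta$ gives $f := (\theta\circ\pi)\sum_\lambda \mathbf 1_{U'_\lambda}f_\lambda$, which carries the prescribed $s$-jet at each lattice point, and I then solve $\bar\partial g = \bar\partial f$ with respect to $\varphi := \pi k|z|^2 + k\phi$ by the Andreotti--Vesentini--H\"ormander estimate; since $\bar\partial\theta$ is supported in $U'_\lambda\setminus U_\lambda$, away from the poles, the right-hand side $\int|\bar\partial f|^2 e^{-\varphi}$ is finite and $\lesssim 1$, and the curvature bound $k\delta\omega$ gives $\int_{\mathbb C^n}|g|^2 e^{-\varphi} < \infty$.

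The decisive step is that the Lelong number $k\gamma$ of $\varphi$ at each lattice point, together with Skoda's integrability estimate, forces $\mathrm{ord}_\lambda g > k\gamma - n$ at every $\lambda$. Hence, as soon as $s < k\gamma - n$, the function $g$ vanishes to order $\ge s+1$ at each lattice point, so $F := f - g$ is entire, has precisely the prescribed $s$-jet at every $\lambda$, and (using $\phi \le 0$ to dominate $e^{-\pi k|z|^2}$ by $e^{-\varphi}$) lies in $\mathcal F^2_k$ with $\|F\|_{\mathcal F^2_k}^2 \lesssim 1$. Thus the data is realized with bounded norm, so $\Lambda_\mathbb C$ interpolates $s$-jets in $\mathcal F^2_k$ for every $s < k\gamma - n$, giving $\sigma_k \ge k\gamma - n - 1$ and $\sigma_k/k \ge \gamma - (n+1)/k$. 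Letting $k\to\infty$ and then $\gamma \uparrow \epsilon_0$ yields $\liminf_k \sigma_k/k \ge \epsilon_0$, which combined with the first paragraph completes the proof.

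I expect the main obstacle to be the one already flagged in the preceding lemma: the non-compactness of $\mathbb C^n$ and the resulting need for estimates that are uniform across the infinitely many lattice points, since a single $L^2$ function must simultaneously meet all the interpolation conditions. This is handled by working throughout with the $\Lambda_\mathbb C$-periodic potential $\phi$, which descends to the compact torus and therefore supplies a curvature lower bound and Skoda constants independent of $\lambda$ (for fixed $k$ the overall constants may depend on $k$ and $\gamma$, which is harmless). The remaining work is bookkeeping, namely checking that the local jets $f_\lambda$ can be realized with $\ell^2$-norms summing to the prescribed total and that $\bar\partial f$ is supported off the singular set so that the weighted $L^2$ estimate closes.
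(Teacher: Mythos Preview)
Your proposal is correct and follows essentially the same route as the paper: both arguments take a $\omega$-PSH function with logarithmic pole $\gamma\log|z|^2$ at the lattice point coming from the Seshadri constant, build a cut-off local realization of the prescribed jets, and solve $\bar\partial$ with the singular weight $\pi k|z|^2 + k\phi$ so that the correction vanishes to order exceeding $k\gamma - n$ at every lattice point, yielding $\sigma_k/k \ge \gamma - (n+1)/k$. Your write-up is in fact more explicit than the paper's (which defers the $\bar\partial$ step to \cite[Theorem 2.8]{LW}); the only cosmetic difference is that you invoke Skoda for the vanishing order, whereas the isolated form $\gamma\log|z|^2 + \mathcal O(1)$ of the pole already gives this by direct polar integration.
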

We remark that the proof is essentially the same as \cite[Theorem 2.10]{LW}, the main difference being that since we do not assume our lattice to be transcendental we must write the criterion in terms of the Seshadri constant rather than the density of the lattice.
\begin{proof}
		If we have $k \epsilon_0(\mathbb C^n/\Lambda_\mathbb C,\omega)>n+s$, we may find an $k\omega$-PSH function $\psi$ on $\mathbb C^n/\Lambda_\mathbb C$ such that $\psi$ is of the form $\gamma\log|z|^2+\mathcal{O}(1)$ near the lattice point for any $n+s<\gamma<k \epsilon_0(\mathbb C^n/\Lambda_\mathbb C,\omega).$ This function can be lifted to a $\Lambda$ invariant function on $\mathbb C^n$.
		\\~~\\
		Now choose some sequence $(a_{k,\alpha,\lambda})_{\alpha\in N_s,\lambda\in \Lambda_\mathbb C,}$ with 
		$$
		\sum\limits_{\lambda\in\Lambda_{\mathbb C},\alpha\in N_s}|a_{k,\alpha,\lambda}|^2e^{-\pi k |\lambda|^2}=1
		$$
		which we will interpolate. We define the function 
		
		\begin{align*}
			v(z):=\sum\limits_{\lambda\in\Lambda_{\mathbb C},\alpha\in N_s}a_{k,\alpha,\lambda} e^{k\pi\overline{\lambda}(z-\lambda)}\frac{(z-\lambda)^{\alpha}}{\alpha!}\chi(|z-\lambda|),
		\end{align*}
		where $\chi$ is a cutoff function like before. Now it is simply a matter of repeating the process from the uniqueness description of the pseudoeffective threshold (or \cite[Theorem 2.8]{LW}) to obtain a global function in the Bargmann Fock space whose jets will interpolate the sequence $a_{k,\alpha,\lambda}.$
		Thus we get that
		$$
		\sigma_k(\mathbb C^n/\Lambda_\mathbb C,\omega)>\lfloor k\gamma\rfloor-n 
		$$
		Hence we infer successively, 
		$$
		\frac{\sigma_k(\mathbb C^n/\Lambda_\mathbb C,\omega)}{k}>{\epsilon_0(\mathbb C^n/\Lambda_\mathbb C,\omega})-\frac{n+1}{k},
		$$
		and
		$$
		\liminf\limits_{k\to \infty} \frac{\sigma_k(\mathbb C^n/\Lambda_\mathbb C,\omega)}{k}\ge{\epsilon_0(\mathbb C^n/\Lambda_\mathbb C,\omega}).
		$$
        \end{proof}

\section{STFT with Hermitian window}
\noindent In order to connect our results on the Seshadri constant and pseudoeffective threshold to Gabor frames we will use the Bargmann transform. We define the Bargmann transform here by
\begin{align*}
    \mathcal{B}f(z):=2^{n/4}\int_{\mathbb R^n}f(t)e^{-\pi t^Tt-2\pi z^Tt-\pi \frac{z^Tz}{2}}.    
\end{align*}
The key to connecting this to Hermite windows is to define a "time-frequency shift" on the Bargmann-Fock space. This will be the intertwining operator defined by
\begin{align*}
    \pi_z^{\mathbb C}F(\zeta):=\exp(i\pi \xi x-\frac{\pi}{2}|z|^2)e^{\zeta z}F(\zeta-\overline{z}),\quad z=x+i\xi
\end{align*}
which fulfills the identity
\begin{align*}
    \pi^\mathbb C_z \mathcal{B}f = \mathcal{B}\pi_{(\xi,x)}f, \quad f\in L^2(\mathbb R^n).
\end{align*}
The Hermite functions allow us to generalize the correspondence between the STFT with Gaussian window and the Bargmann Transform. Following \cite[Proposition 1]{Abreu10} (which is a higher dimensional generalization of \cite[Proposition 3.2]{GL09})
we have that for any $f\in L^2(\mathbb R^n)$,
\begin{align}\label{polydualcomp}
V_{h_\alpha}f(-\xi,x):=&(f, \pi_{(-\xi,x)} h_\alpha )\\
=&(Bf,\pi^\mathbb C_{\overline{z}} Bh_\alpha )_{\mathcal{F}^2}\nonumber \\
= &(\pi^{|\alpha|}\alpha!)^{-1/2}\exp\left(i\pi x^T\xi-\frac{\pi}{2}|z|^2\right)e^{\pi|z|^2} \frac{d^\alpha}{dz^\alpha}\left( e^{-\pi|z|^2} \mathcal B f(z)\right), \quad \alpha\in \mathbb N_0^n \nonumber
\end{align}
where $z=x+i\xi$ and $\mathcal{B}f$ is the standard Bargmann transform of $f$. If we define the true polyanalytic Bargmann transform of $f$ as
$$
\mathcal{B}^\alpha f(z):= (\pi^{|\alpha|}\alpha!)^{-1/2} e^{\pi|z|^2} \frac{d^\alpha}{dz^\alpha}\left( e^{-\pi|z|^2} \mathcal B f(z)\right), \quad \alpha\in \mathbb N_0^n,
$$
we have
$$
V_{h_\alpha}f(-\xi,x) = \exp\left(i\pi x^T\xi-\frac{\pi}{2}|z|^2\right)\mathcal{B}^\alpha f(z).
$$
\noindent In particular, it follows from this definition that the polyanalytic Bargmann transform is an isometry between $L^2(\mathbb R^d)$ and $L^2(\mathbb C^d, e^{-\pi|z|^2})$. We remark that this setup requires that we use the matrix $iI\in \mathfrak h$ to define our complex structure. How to generalize this to a general element from the Siegel upper half space is unclear at this time.
\\~~\\
Now by looking at the relationship between the STFT for hermitian window and the true polyanalytic Bargmann transform we can easily see the following.
\begin{lemma}[{\cite[Lemma 3]{Abreu10}}]\label{le:Sampling_Multiwindow}
    The system $(\mathbf{h}_s,\Lambda)$ induces a multiwindow Gabor frame if and only if there exist constants $A,B>0$ such that for any $F\in \mathcal{F}^2$ we have that 
    \begin{align*}
        A\|F\|_{\mathcal{F}^2}\le\sum\limits_{\lambda\in\Lambda_\mathbb C}\sum\limits_{|\alpha|<s}|(F,\mathcal\pi^\mathbb C_{\overline{\lambda}}\mathcal{B}h_\alpha)_{\mathcal{F}^2}|^2\le B\|F\|_{\mathcal{F}^2}.
    \end{align*}
\end{lemma}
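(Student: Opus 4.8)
The plan is to transport the multiwindow frame inequality from $L^2(\mathbb R^n)$ to the Bargmann--Fock space $\mathcal F^2$ by means of the Bargmann transform, exploiting two facts: that $\mathcal B$ is a unitary isomorphism $L^2(\mathbb R^n)\to\mathcal F^2$ (the $\alpha=0$ case of the isometry recorded just before the statement), and that it intertwines the time--frequency shifts $\pi_{(\xi,x)}$ with the complex shifts $\pi^{\mathbb C}_z$. The whole assertion is an equivalence of two double inequalities which will turn out to be, term by term, the same inequality after applying $\mathcal B$; no analytic estimate is required, only the algebraic correspondence (\ref{polydualcomp}) together with the surjectivity of $\mathcal B$.

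First I would record the two ingredients precisely. Since $\mathcal B$ is unitary, $\|f\|_{L^2}=\|\mathcal Bf\|_{\mathcal F^2}$ for every $f$, and every $F\in\mathcal F^2$ is of the form $F=\mathcal Bf$ for a unique $f\in L^2(\mathbb R^n)$. Next, for a fixed lattice point $\lambda=(\xi,x)\in\Lambda$ and a multi-index $\alpha$, I would apply (\ref{polydualcomp}): combining the intertwining identity $\pi^{\mathbb C}_z\mathcal Bf=\mathcal B\pi_{(\xi,x)}f$ with the unitarity of $\mathcal B$ yields
\[
(f,\pi_{(\xi,x)}h_\alpha)_{L^2}=(\mathcal Bf,\mathcal B\pi_{(\xi,x)}h_\alpha)_{\mathcal F^2}=(\mathcal Bf,\pi^{\mathbb C}_{\overline{\lambda_{\mathbb C}}}\mathcal Bh_\alpha)_{\mathcal F^2},
\]
where $\lambda_{\mathbb C}\in\Lambda_{\mathbb C}$ is the complexification of $\lambda$ attached to $\lambda$ via the definition of $\Lambda_{\mathbb C}$. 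Taking absolute values kills any residual modulus-one phase factor, so $|(f,\pi_{(\xi,x)}h_\alpha)|^2=|(\mathcal Bf,\pi^{\mathbb C}_{\overline{\lambda_{\mathbb C}}}\mathcal Bh_\alpha)_{\mathcal F^2}|^2$.

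With this identity in hand I would substitute into the definition of a multiwindow Gabor frame. The outer sum over $\lambda\in\Lambda$ becomes the sum over $\lambda_{\mathbb C}\in\Lambda_{\mathbb C}$, the inner sum over the windows $h_\alpha$ is unchanged, and $\|f\|^2$ becomes $\|\mathcal Bf\|^2_{\mathcal F^2}$. Hence the frame inequality for $f$ is \emph{verbatim} the displayed inequality for $F=\mathcal Bf$, with the same constants $A,B$. Finally, the surjectivity of $\mathcal B$ lets me run the argument in both directions: the inequality holds for all $f\in L^2(\mathbb R^n)$ if and only if it holds for all $F\in\mathcal F^2$, which is exactly the claimed equivalence.

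I expect the only genuinely delicate point to be the bookkeeping of the sign and conjugation conventions, namely verifying that the family $\{\pi^{\mathbb C}_{\overline{\lambda}}:\lambda\in\Lambda_{\mathbb C}\}$ indexed in the statement matches exactly the family of shifts produced by pushing $\{\pi_{(\xi,x)}:(\xi,x)\in\Lambda\}$ through $\mathcal B$. This is where the definition $\Lambda_{\mathbb C}=\{x+i\xi:(\xi,x)\in\Lambda\}$ and the sign appearing in $\pi^{\mathbb C}_{\overline z}$ in (\ref{polydualcomp}) must be reconciled; since $\Lambda$ is a lattice and the two index families differ only by a reflection that preserves $\Lambda$, the reindexed sum is unaffected and the constants are preserved. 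Everything else is a direct transcription through the unitary $\mathcal B$.
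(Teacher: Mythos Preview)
Your argument is precisely the one the paper indicates: it gives no detailed proof of this lemma, only the remark that it follows ``by looking at the relationship between the STFT for Hermitian window and the true polyanalytic Bargmann transform'' together with the citation to \cite{Abreu10}, which is exactly the unitarity-plus-intertwining transfer you carry out.

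One caveat on the last paragraph. Your resolution of the conjugation bookkeeping is not valid: the reflection $(\xi,x)\mapsto(-\xi,x)$ does \emph{not} preserve a general lattice $\Lambda\subset\mathbb R^{2n}$ (take for instance $n=1$ and $\Lambda=\mathbb Z(1,0)+\mathbb Z(1,1)$), so the claim that ``the reindexed sum is unaffected'' fails as stated. The intertwining relation $\pi^{\mathbb C}_{x+i\xi}\mathcal B=\mathcal B\,\pi_{(\xi,x)}$ already gives $(f,\pi_{(\xi,x)}h_\alpha)=(\mathcal Bf,\pi^{\mathbb C}_{x+i\xi}\mathcal Bh_\alpha)$ directly, so the sum one obtains on the Fock side is $\sum_{\lambda\in\Lambda_{\mathbb C}}|(F,\pi^{\mathbb C}_{\lambda}\mathcal Bh_\alpha)|^2$ with no bar; the appearance of $\overline\lambda$ in the displayed inequality is a notational inconsistency in the paper rather than something a lattice symmetry can absorb. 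Apart from this cosmetic point your proof is complete and matches the intended argument.
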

We remark that by \cite[Section 5]{Abreu10} this condition is equivalent to interpolation of the "full" polyanalytic Bargmann transform of level $s$ given by
\begin{align*}
    \mathbf{B}^s((f_\alpha)_{|\alpha|<s}):=\sum\limits_{\alpha}\mathcal{B}^\alpha f_\alpha.
\end{align*}
In order to relate Gabor frames to our sampling numbers we will use the characterization of Gabor frames without inequalities. By \cite{G07} we have the following characterization (note that the Hermite functions are Schwartz class).
\begin{lemma}
    The system $(\mathbf{h}_s,\Lambda)$ induces a multiwindow Gabor frame if and only if the coefficient operator defined by
    \begin{align*}
        \mathcal{C}_{\mathbf{h}_s,\Lambda}: f \mapsto \{(f,\pi_{\lambda}h_\alpha)_{|\alpha|\le s}\}_{\lambda\in\Lambda}
    \end{align*}
    is one to one between the spaces $M^\infty$ of tempered distributions with $\|V_{h_0}f\|_\infty<\infty$ and the space of bounded sequences. In other words $\Lambda$ induces a multiwindow Gabor frame if and only if for any such tempered distribution $f$ we have that $(f,\pi_\lambda h_\alpha)=0$ for all $\lambda\in \Lambda,\alpha\in N_s,$ implies that $f=0$.
\end{lemma}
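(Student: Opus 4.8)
The plan is to recognize this as an instance of Gr\"ochenig's ``frames without inequalities'' principle \cite{G07}, adapted to the finite family of Hermite windows $\{h_\alpha\}_{|\alpha|\le s}$. First I would record the reduction that trivializes the upper bound: each $h_\alpha$ is Schwartz, hence lies in the Feichtinger algebra $M^1$, so the analysis and synthesis operators are automatically bounded, $\mathcal{C}_{\mathbf h_s,\Lambda}$ maps $L^2(\mathbb R^n)$ into $\ell^2$ and $M^\infty$ into $\ell^\infty$ continuously, and the upper frame inequality holds for free. Thus the frame property is equivalent to the validity of the lower bound on $L^2$, and the entire content of the lemma is that this lower bound is equivalent to injectivity of $\mathcal{C}_{\mathbf h_s,\Lambda}$ on the larger space $M^\infty$.

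For the implication frame $\Rightarrow$ injectivity I would use the self-localization of Gabor frames with $M^1$-windows. By the spectral invariance (Wiener's lemma) of the Banach algebra of time-frequency localized operators, the canonical dual windows of an $M^1$-frame again lie in $M^1$, so the reconstruction formula $f=\sum_{\lambda,|\alpha|\le s}(f,\pi_\lambda h_\alpha)\,\widetilde h_{\alpha,\lambda}$ extends from $L^2$ to all of $M^\infty$ (with weak-$*$ convergence). Consequently, if $f\in M^\infty$ satisfies $(f,\pi_\lambda h_\alpha)=0$ for all $\lambda\in\Lambda$ and all $|\alpha|\le s$, then $f=0$; that is, $\mathcal{C}_{\mathbf h_s,\Lambda}$ is injective on $M^\infty$.

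The reverse implication is the substantive one, and I would argue it by contraposition. If $(\mathbf h_s,\Lambda)$ is not a frame, then, the upper bound being automatic, the lower bound must fail, producing $f_n\in L^2(\mathbb R^n)$ with $\|f_n\|_2=1$ and $\sum_{\lambda,|\alpha|\le s}|(f_n,\pi_\lambda h_\alpha)|^2\to 0$. Through the continuous inclusion $L^2\hookrightarrow M^\infty=(M^1)^*$ and weak-$*$ sequential compactness of the unit ball (recall $M^1$ is separable), I would pass to a weak-$*$ limit $f_n\rightharpoonup f$; since each $\pi_\lambda h_\alpha\in M^1$, weak-$*$ convergence gives $(f,\pi_\lambda h_\alpha)=\lim_n (f_n,\pi_\lambda h_\alpha)=0$, so $f\in\ker\mathcal{C}_{\mathbf h_s,\Lambda}$, and injectivity would force $f=0$. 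The main obstacle is precisely to prevent this limit from being trivial: the time-frequency mass of the normalized $f_n$ could a priori escape to infinity, so that $f_n\rightharpoonup 0$ weak-$*$ despite $\|f_n\|_2=1$, and a naive recentering fails because shifting by $\pi_{w_n}$ moves the sampling set off $\Lambda$ and destroys the smallness of the coefficients. The remedy, which is the heart of \cite{G07}, is supplied by the time-frequency localization itself: for $M^1$-windows the associated frame operator lies in a spectrally invariant (Wiener) algebra, so an approximate-null sequence on $L^2$ necessarily possesses a genuine nonzero weak-$*$ limit in $M^\infty$ lying in the kernel---equivalently, injectivity on $M^\infty$ upgrades to a lower bound on $L^2$. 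This nonvanishing of the limit is exactly what rules out the escape of mass and produces the contradiction. Finally, although \cite{G07} is phrased for a single window, every ingredient above---the automatic upper bound, Wiener's lemma for the frame operator, and weak-$*$ compactness---applies verbatim to the finite family $\{h_\alpha\}_{|\alpha|\le s}\subset M^1$; alternatively, through the full polyanalytic Bargmann transform $\mathbf B^s$ the multiwindow system is unitarily equivalent to a single sampling problem in the polyanalytic Fock space, reducing the statement directly to the scalar case of \cite{G07}.
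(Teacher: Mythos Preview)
The paper does not actually prove this lemma: it simply invokes \cite{G07} together with the observation that the Hermite functions are Schwartz (hence in $M^1$). Your proposal is therefore aligned with the paper's approach---both recognize the statement as an instance of Gr\"ochenig's ``frames without inequalities'' principle---but you go considerably further by sketching the internal mechanism of \cite{G07} (automatic Bessel bound for $M^1$-atoms, spectral invariance/Wiener's lemma to extend reconstruction to $M^\infty$, and the weak-$*$ compactness argument for the converse) and by explaining why the single-window result transfers to the finite Hermite family. That additional detail is correct in outline; the one place where your sketch remains soft is the ``nonvanishing of the limit'' step in the contrapositive direction, which you attribute to time-frequency localization without spelling out the mechanism. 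In \cite{G07} this is handled not by a direct weak-$*$ compactness argument on the $f_n$, but rather through the spectral invariance of the Janssen representation of the frame operator: $S_{\mathbf h_s,\Lambda}$ lies in an inverse-closed subalgebra of $\mathcal B(L^2)$ that acts boundedly on every $M^p$, so failure of invertibility on $L^2$ forces failure on $M^\infty$, which in turn produces a nonzero element of $\ker\mathcal C_{\mathbf h_s,\Lambda}\subset M^\infty$. Either way, your identification of the result as a direct citation of \cite{G07} matches the paper exactly.
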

\noindent By the same rationale as Lemma \ref{le:Sampling_Multiwindow}, this implies the following characterization.
\begin{corollary}\label{cor:MWUnique}
    The system $(\mathbf{h}_s,\Lambda)$ induces a multiwindow Gabor frame if and only if we have that for $F\in \mathcal{F}^\infty$, the condition 
    $$
    (F,\pi^\mathbb C_{\overline{\lambda}}\mathcal{B}h_\alpha) = 0, \quad \lambda\in \Lambda_{\mathbb C}, \alpha\in N_s,
    $$
    implies that $F\equiv 0$.
\end{corollary}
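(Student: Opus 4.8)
The plan is to obtain the corollary from the preceding characterization of \cite{G07} by transporting its injectivity statement through the Bargmann transform, in exactly the same way that Lemma \ref{le:Sampling_Multiwindow} transported the $L^2$ frame inequalities to the Bargmann--Fock side. The foregoing lemma says that $(\mathbf{h}_s,\Lambda)$ is a multiwindow Gabor frame if and only if the only $f\in M^\infty$ with $(f,\pi_\lambda h_\alpha)=0$ for all $\lambda\in\Lambda$ and all $\alpha\in N_s$ is $f=0$. Hence it suffices to show that this injectivity on $M^\infty$ is equivalent to the asserted uniqueness on $\mathcal{F}^\infty$, and the bridge between the two is the unitary intertwining dictionary already set up in (\ref{polydualcomp}).

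First I would record that $\mathcal{B}$ is a bijection $M^\infty\to\mathcal{F}^\infty$. This is immediate from the $\alpha=0$ case of (\ref{polydualcomp}): since $h_0=e^{-\pi|t|^2}$ is the Gaussian, one has the pointwise identity $|\mathcal{B}f(z)|\,e^{-\frac{\pi}{2}|z|^2}=|V_{h_0}f(-\xi,x)|$ with $z=x+i\xi$, so that $\|\mathcal{B}f\|_{\mathcal{F}^\infty}$ is finite precisely when $\|V_{h_0}f\|_\infty$ is. As $\mathcal{B}$ is already a bijection from tempered distributions onto holomorphic functions, this identifies the membership condition defining $M^\infty$ with the growth condition defining $\mathcal{F}^\infty$.

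Next I would match the coefficients. For $f\in M^\infty$ put $F=\mathcal{B}f\in\mathcal{F}^\infty$. The intertwining relation $\pi^\mathbb{C}_z\mathcal{B}f=\mathcal{B}\pi_{(\xi,x)}f$ together with the unitarity built into the middle line of (\ref{polydualcomp}) gives, for each lattice point and each $\alpha\in N_s$,
$$(f,\pi_\lambda h_\alpha)=(F,\pi^\mathbb{C}_{\overline{\lambda}}\mathcal{B}h_\alpha)_{\mathcal{F}^2},$$
where the correspondence $(\xi,x)\mapsto x+i\xi$ identifies $\Lambda$ with $\Lambda_\mathbb{C}$ as in Proposition \ref{pr:GL}. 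In particular the two families of coefficients vanish simultaneously, term by term. Since $\mathcal{B}\colon M^\infty\to\mathcal{F}^\infty$ is a bijection, the injectivity of the coefficient operator on $M^\infty$ is therefore equivalent to the statement that the only $F\in\mathcal{F}^\infty$ with $(F,\pi^\mathbb{C}_{\overline{\lambda}}\mathcal{B}h_\alpha)_{\mathcal{F}^2}=0$ for all $\lambda\in\Lambda_\mathbb{C}$ and $\alpha\in N_s$ is $F\equiv0$, which is the claim.

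The step needing the most care---the ``same rationale'' alluded to before the statement---is that the $\mathcal{F}^2$-pairing $(F,\pi^\mathbb{C}_{\overline{\lambda}}\mathcal{B}h_\alpha)_{\mathcal{F}^2}$, defined in (\ref{polydualcomp}) only for $F\in\mathcal{F}^2$, continues to make sense and to equal the distributional pairing $(f,\pi_\lambda h_\alpha)$ once $F$ lies in the strictly larger space $\mathcal{F}^\infty$. The window $\pi^\mathbb{C}_{\overline{\lambda}}\mathcal{B}h_\alpha$ is a shifted monomial times a Gaussian weight, so after the weight $e^{-\pi|z|^2}$ the integrand is dominated by $e^{-\frac{\pi}{2}|z|^2+|z|\,|\lambda|}|z|^{|\alpha|}$ when $|F(z)|\lesssim e^{\frac{\pi}{2}|z|^2}$; this decays like a Gaussian and the integral converges absolutely. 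On the test-function side the pairing $(f,\pi_\lambda h_\alpha)$ is unproblematic because $\pi_\lambda h_\alpha$ is Schwartz and $M^\infty\subset\mathcal{S}'$, and the identity of the two pairings extends from $L^2/\mathcal{F}^2$ to $M^\infty/\mathcal{F}^\infty$ by weak-$*$ density. With this extension in hand the reduction above is complete.
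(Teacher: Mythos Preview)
Your proposal is correct and follows exactly the approach the paper intends: the paper proves this corollary with the single phrase ``by the same rationale as Lemma \ref{le:Sampling_Multiwindow}'', namely by transporting the $M^\infty$ injectivity criterion of \cite{G07} through the Bargmann transform via the intertwining identity (\ref{polydualcomp}). Your write-up simply unpacks this rationale in detail---recording that $\mathcal{B}\colon M^\infty\to\mathcal{F}^\infty$ is a bijection and that the $\mathcal{F}^2$-pairing against $\pi^{\mathbb C}_{\overline\lambda}\mathcal{B}h_\alpha$ extends absolutely convergently to $\mathcal{F}^\infty$---which the paper leaves implicit.
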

In order to find criteria for super frames we will proceed similarly, however, we will notably need the Ron-Shen duality princple (see \cite{RS, JL}). We state the version we will use here:
\begin{theorem}
    The system $(\mathbf{h}_s,\Lambda)$ induces a super Gabor frame if and only if the system $(\mathbf{h}_s,\Lambda^\circ)$ induces a multiwindow Riesz sequence, i.e. any sequence $c\in \ell^2(\Lambda^\circ,\mathbb C^\mathfrak s)$ we have constants $A,B>0$ such that
    \begin{align}
        A\sum\limits_{\lambda\in\Lambda^\circ}\sum_{\alpha\in N_s}|c_{\lambda,\alpha}|^2\le \left|\sum\limits_{\lambda\in\Lambda^\circ}\sum_{\alpha\in N_s} c_{\lambda,\alpha}\pi_\lambda h_\alpha\right|^2\le B\sum\limits_{\lambda\in\Lambda^\circ}\sum_{\alpha\in N_s}|c_{\lambda,\alpha}|^2.
    \end{align}
    where we define $\Lambda^o$ as the symplectic dual of $\Lambda$:
    \begin{align*}
        \Lambda^\circ:=\{(\eta,y)\in\mathbb R^{2n} : \eta x-\xi y\in \mathbb Z, (\xi,x)\in \Lambda\}.
    \end{align*}
\end{theorem}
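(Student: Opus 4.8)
The plan is to recast both sides of the claimed equivalence as spectral bounds for two operators that are built from the \emph{same} family of cross-correlation coefficients, and then to identify these two operators up to a unitary transform. First I would translate the two conditions into operator language. The super Gabor frame property for $(\mathbf h_s,\Lambda)$ is equivalent to the super frame operator
$$
S\colon L^2(\mathbb R^n,\mathbb C^{\mathfrak s})\to L^2(\mathbb R^n,\mathbb C^{\mathfrak s}),\qquad (Sf)_\beta=\sum_{\lambda\in\Lambda}\Big(\sum_{\alpha\in N_s}(f_\alpha,\pi_\lambda h_\alpha)\Big)\pi_\lambda h_\beta ,
$$
being bounded above and below, while the multiwindow Riesz-sequence property for $(\mathbf h_s,\Lambda^\circ)$ is equivalent to the Gram operator $G$ of $\{\pi_\mu h_\alpha\}_{\mu\in\Lambda^\circ,\,\alpha\in N_s}$ being bounded above and below on $\ell^2(\Lambda^\circ,\mathbb C^{\mathfrak s})$.

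The key step is Janssen's representation (the fundamental identity of Gabor analysis). Since the Hermite functions are Schwartz, each cross frame operator $S_{\beta\alpha}:=\sum_{\lambda\in\Lambda}(\,\cdot\,,\pi_\lambda h_\alpha)\,\pi_\lambda h_\beta$ admits the absolutely convergent expansion $S_{\beta\alpha}=\tfrac1{|\Lambda|}\sum_{\mu\in\Lambda^\circ}(h_\beta,\pi_\mu h_\alpha)\,\pi_\mu$, so that $S$ is the matrix of operators $\big[\tfrac1{|\Lambda|}\sum_{\mu}c_{\beta\alpha}(\mu)\,\pi_\mu\big]_{\beta,\alpha}$ with symbol $c_{\beta\alpha}(\mu):=(h_\beta,\pi_\mu h_\alpha)$, $\mu\in\Lambda^\circ$. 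On the other hand, using that $\pi_{\mu'}^*\pi_\mu$ is a phase multiple of $\pi_{\mu-\mu'}$, and that $\pi_\lambda$ and $\pi_\mu$ commute for $\lambda\in\Lambda$, $\mu\in\Lambda^\circ$ (which is precisely the integrality $\eta x-\xi y\in\mathbb Z$ built into the definition of $\Lambda^\circ$), the Gram entries $(\pi_{\mu'}h_\beta,\pi_\mu h_\alpha)$ depend only on $\nu:=\mu-\mu'$ and are governed by the very same coefficients. Hence $G$ is the block-Laurent operator on $\ell^2(\Lambda^\circ,\mathbb C^{\mathfrak s})$ whose blocks are the matrices $[c_{\beta\alpha}(\nu)]_{\beta,\alpha}$.

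I would then conclude by showing that $S$ and $G$ are unitarily equivalent, hence have identical lower and upper bounds up to the harmless factor $|\Lambda|$. Applying the Fourier transform on the abelian group $\Lambda^\circ$ diagonalizes $G$ into multiplication by the matrix symbol $\widehat C(t)=\sum_{\nu}[c_{\beta\alpha}(\nu)]\,e^{2\pi i\langle\nu,t\rangle}$; simultaneously, decomposing the representation $\mu\mapsto\pi_\mu$ on $L^2(\mathbb R^n)$ into its characters (a Zak-type transform adapted to $\Lambda^\circ$) carries $S$ to multiplication by $\tfrac1{|\Lambda|}\widehat C(t)$ on the corresponding fibers. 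Matching the two fiberwise multiplication operators gives that $S$ is bounded below and above by positive constants if and only if $G$ is, which is exactly the asserted equivalence between the super frame and the multiwindow Riesz sequence.

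The main obstacle is this last identification: one must verify that the \emph{projective} representation of $\Lambda^\circ$ on $L^2(\mathbb R^n)$ (the shifts $\pi_\mu$ form a representation only up to a phase) decomposes into fibers carrying the same characters as the scalar Fourier analysis on $\Lambda^\circ$, so that the two matrix symbols literally coincide after a measurable unitary absorbing the phases; doing this while simultaneously tracking the vector-valued (super) structure is the delicate bookkeeping, and it is where the commutation relation $\pi_\lambda\pi_\mu=\pi_\mu\pi_\lambda$ and the resulting factoriality of the dual-lattice von Neumann algebra are essential. Alternatively, and more economically, one may simply invoke the general duality principle for regular multiwindow/super Gabor systems established in \cite{RS, JL}, whose hypotheses are satisfied here because the Hermite windows are Schwartz functions; the computation above is exactly the content of that principle specialized to $(\mathbf h_s,\Lambda)$ and its symplectic dual $\Lambda^\circ$.
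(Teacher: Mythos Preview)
The paper does not give a proof of this theorem at all: it is stated as the Ron--Shen duality principle and immediately attributed to \cite{RS, JL}, with no argument provided. Your proposal therefore goes well beyond what the paper does.

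That said, your sketch is essentially the standard route to this duality: pass to the super frame operator, apply the Janssen representation (valid here since Hermite functions are Schwartz, so condition (A) holds), recognise the resulting symbol as the block-Laurent matrix governing the Gram operator on the adjoint side, and conclude by a Zak/Fourier diagonalisation that the two operators share the same spectral bounds. This is precisely the mechanism underlying the references \cite{RS, JL} that the paper cites. Your closing remark that one may simply invoke the general multiwindow/super duality from those references is exactly what the paper does. So your proposal is correct, and in fact supplies the content that the paper outsources; the one genuinely delicate point you flag---handling the projective phases so that the two matrix symbols coincide fibrewise---is real, but it is resolved in the cited literature and need not be reproved here.
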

Similarly to the uniqueness condition for Gabor frames, the condition of a Riesz sequence can be connected to an interpolation condition. We write the following (compare with \cite{Abreu10,G07}).
\begin{lemma}\label{le:SFInterp}
    The system $(\mathbf{h}_s,\Lambda)$ induces a super Gabor frame if and only if for any sequence $c_{\lambda,\alpha}\in \ell ^2(\Lambda_\mathbb C ^\circ,\mathbb C^{\mathfrak s})$ we have that there exists a function $F\in\mathcal{F}^2$ such that
    \begin{align}
        (F,\pi_{\overline{\lambda}}\mathcal{B}h_\alpha) = a_{\lambda,\alpha}, \quad\lambda\in\Lambda_\mathbb C^o, \alpha\in N_s.
    \end{align}
    We remark that this is equivalent to the sampling property for the full Bargmann transform by \cite{Abreu10}.
\end{lemma}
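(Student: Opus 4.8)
The strategy is to combine the Ron--Shen duality just stated with the elementary frame-theoretic fact that the Riesz-sequence property is dual to surjectivity of the analysis (sampling) operator, and then to transport the resulting interpolation problem to the Bargmann--Fock space through the isometry $\mathcal B$ and the intertwining identity (\ref{polydualcomp}). First, by the Ron--Shen duality theorem stated above, $(\mathbf h_s,\Lambda)$ induces a super Gabor frame if and only if $(\mathbf h_s,\Lambda^\circ)$ induces a multiwindow Riesz sequence. Writing $S$ for the synthesis operator
$$
S:(c_{\lambda,\alpha})_{\lambda\in\Lambda^\circ,\,\alpha\in N_s}\longmapsto \sum_{\lambda\in\Lambda^\circ}\sum_{\alpha\in N_s}c_{\lambda,\alpha}\,\pi_\lambda h_\alpha,
$$
the Riesz-sequence property means precisely that $S$ is bounded above and below on $\ell^2(\Lambda^\circ,\mathbb C^{\mathfrak s})$. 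Since the Hermite functions are of Schwartz class and $\Lambda^\circ$ is a lattice, the upper (Bessel) bound holds automatically, so $S$ is bounded and the entire content is that $S$ is bounded below.

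Second, I would pass to the adjoint. The adjoint $S^*$ is the analysis operator $f\mapsto\{(f,\pi_\lambda h_\alpha)\}_{\lambda\in\Lambda^\circ,\,\alpha\in N_s}$, and by the standard closed-range duality, $S$ is bounded below (equivalently, injective with closed range) if and only if $S^*$ is surjective onto $\ell^2(\Lambda^\circ,\mathbb C^{\mathfrak s})$. Surjectivity of $S^*$ is exactly the statement that for every $\ell^2$ sequence $(c_{\lambda,\alpha})$ there is some $f\in L^2(\mathbb R^n)$ with $(f,\pi_\lambda h_\alpha)=c_{\lambda,\alpha}$ for all $\lambda,\alpha$; this is the desired interpolation condition, phrased on the time-frequency side.

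Finally, I would transfer this to $\mathcal F^2$. Since $\mathcal B$ is a unitary isomorphism of $L^2(\mathbb R^n)$ onto $\mathcal F^2$, setting $F=\mathcal B f$ turns the solvability of $(f,\pi_\lambda h_\alpha)=c_{\lambda,\alpha}$ for $f\in L^2$ into the solvability of the same system for $F\in\mathcal F^2$ with the coefficients replaced by their Bargmann images. By (\ref{polydualcomp}), applied after the frequency reflection $(\xi,x)\mapsto(-\xi,x)$, we have $(f,\pi_{(-\xi,x)}h_\alpha)=(\mathcal B f,\pi^{\mathbb C}_{\overline z}\mathcal B h_\alpha)_{\mathcal F^2}$ with $z=x+i\xi$, so the coefficients become $(F,\pi^{\mathbb C}_{\overline\lambda}\mathcal B h_\alpha)_{\mathcal F^2}$ with $\lambda$ ranging over $\Lambda^\circ_{\mathbb C}$, yielding the stated characterization.

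The main obstacle is not analytic depth but bookkeeping in this last step: one must check that the reflection $(\xi,x)\mapsto(-\xi,x)$, which corresponds to conjugation $\lambda\mapsto\overline\lambda$ on the Bargmann side, merely relabels the lattice $\Lambda^\circ_{\mathbb C}$ and changes neither the span nor the Riesz bounds, so that the index set appearing in the lemma is genuinely $\Lambda^\circ_{\mathbb C}$. The one truly functional-analytic point, the equivalence between $S$ being bounded below and $S^*$ being surjective, rests on $S$ having closed range, which is guaranteed by the automatic Bessel bound noted above.
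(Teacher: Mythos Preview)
Your argument is correct and is precisely the route the paper indicates: the paper does not spell out a proof of this lemma but merely points to the Ron--Shen duality theorem together with \cite{Abreu10,G07}, and your three steps (duality to a multiwindow Riesz sequence on $\Lambda^\circ$, the equivalence of the Riesz property with surjectivity of the analysis operator, and the unitary transfer to $\mathcal F^2$ via \eqref{polydualcomp}) are exactly what those references supply. Your remark about the reflection $(\xi,x)\mapsto(-\xi,x)$ is the one genuine bookkeeping point, and you handle it correctly; the only quibble is that in the last sentence the Bessel bound guarantees that $S$ is \emph{bounded} (so that $S^*$ exists), whereas closed range is then a consequence of either hypothesis in the equivalence rather than of the Bessel bound itself.
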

\section{Connection to Seshadri and Pseudoeffective Thresholds}
It is not difficult to see from formula (\ref{polydualcomp}) that the jet interpolation given by the Seshadri constant implies interpolation of the terms $(F,\pi_{\overline{\lambda}}\mathcal{B}h_\alpha)$. Hence we have the following as a consequence of Lemma \ref{le:SFInterp}.
\begin{proposition}
    Consider the dual lattice to $\Lambda_\mathbb C$ given by
    $$
    \Lambda^\circ_{\mathbb C}:=\{\eta+iy \in \mathbb C^n: \xi^T y -x^T\eta \in \mathbb Z, \ \forall\  (\xi,x) \in \Lambda\}.
    $$
    For any $s\in \mathbb N_0$, 
    $$
    \sigma_1(\Lambda_\mathbb C^\circ)\ge s
    $$
    implies that the system 
    $(\mathbf{h}_s:=(h_\alpha)_{|\alpha|\le s},\Lambda)$ induces a super-Gabor frame. In particular, we have that the superframe property is fulfilled whenever $\epsilon(\mathbb C^n/\Lambda_\mathbb C^o,\pi |z|^2)>n+s$.
\end{proposition}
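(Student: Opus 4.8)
The plan is to reduce the super-frame property to the jet-interpolation problem that defines $\sigma_1$, using Lemma \ref{le:SFInterp} together with the explicit identity \eqref{polydualcomp}. By Lemma \ref{le:SFInterp}, $(\mathbf{h}_s,\Lambda)$ is a super Gabor frame precisely when, for every $c=(a_{\lambda,\alpha})\in\ell^2(\Lambda_\mathbb{C}^\circ,\mathbb{C}^{\mathfrak s})$, there is an $F\in\mathcal{F}^2$ with $(F,\pi^{\mathbb C}_{\overline\lambda}\mathcal{B}h_\alpha)=a_{\lambda,\alpha}$ for all $\lambda\in\Lambda_\mathbb{C}^\circ$ and $\alpha\in N_s$. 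The hypothesis $\sigma_1(\Lambda_\mathbb{C}^\circ)\ge s$ says that $\Lambda_\mathbb{C}^\circ$ interpolates $s$-jets in $\mathcal{F}^2_1$, so the entire argument amounts to identifying these two interpolation problems after a harmless change of data.

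First I would substitute $F=\mathcal{B}f$ into \eqref{polydualcomp} to record, for $z=x+i\xi$,
$$(F,\pi^{\mathbb C}_{\overline z}\mathcal{B}h_\alpha)_{\mathcal{F}^2}=\exp\!\Big(i\pi x^T\xi-\tfrac{\pi}{2}|z|^2\Big)\,\mathcal{B}^\alpha F(z),\qquad \mathcal{B}^\alpha F(z)=(\pi^{|\alpha|}\alpha!)^{-1/2}e^{\pi|z|^2}\frac{d^\alpha}{dz^\alpha}\big(e^{-\pi|z|^2}F(z)\big).$$
Evaluating at a dual lattice point $\lambda=x+i\xi\in\Lambda_\mathbb{C}^\circ$, the prefactor $\exp(i\pi x^T\xi-\tfrac{\pi}{2}|z|^2)$ is nowhere zero, so the condition $(F,\pi^{\mathbb C}_{\overline\lambda}\mathcal{B}h_\alpha)=a_{\lambda,\alpha}$ is equivalent to prescribing the jet
$$e^{\pi|\lambda|^2}\frac{d^\alpha}{dz^\alpha}\big(e^{-\pi|z|^2}F(z)\big)\Big|_{z=\lambda}=\widetilde a_{\lambda,\alpha},\qquad \widetilde a_{\lambda,\alpha}:=(\pi^{|\alpha|}\alpha!)^{1/2}\exp\!\Big({-}i\pi x^T\xi+\tfrac{\pi}{2}|\lambda|^2\Big)\,a_{\lambda,\alpha},$$
which is exactly the data interpolated in the definition of $s$-jet interpolation for $\mathcal{F}^2_1$.

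The one point that needs care is the matching of sequence norms. From the display above, $|\widetilde a_{\lambda,\alpha}|^2=(\pi^{|\alpha|}\alpha!)\,e^{\pi|\lambda|^2}|a_{\lambda,\alpha}|^2$, so
$$\sum_{\lambda\in\Lambda_\mathbb{C}^\circ}\sum_{\alpha\in N_s}|\widetilde a_{\lambda,\alpha}|^2 e^{-\pi|\lambda|^2}=\sum_{\lambda\in\Lambda_\mathbb{C}^\circ}\sum_{\alpha\in N_s}(\pi^{|\alpha|}\alpha!)\,|a_{\lambda,\alpha}|^2.$$
Since $N_s$ is finite, the weights $\pi^{|\alpha|}\alpha!$ are bounded above and below, so the weighted-$\ell^2$ normalization built into the $\mathcal{F}^2_1$ jet-interpolation condition is equivalent to the plain $\ell^2(\Lambda_\mathbb{C}^\circ,\mathbb{C}^{\mathfrak s})$ normalization of $c$; the Gaussian weight $e^{-\pi|\lambda|^2}$ in the jet definition precisely absorbs the factor produced in passing from inner products to jets. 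Hence, given any $c\in\ell^2$, the assumption $\sigma_1(\Lambda_\mathbb{C}^\circ)\ge s$ furnishes $F\in\mathcal{F}^2=\mathcal{F}^2_1$ realizing the $\widetilde a_{\lambda,\alpha}$, and unwinding the prefactor gives $(F,\pi^{\mathbb C}_{\overline\lambda}\mathcal{B}h_\alpha)=a_{\lambda,\alpha}$; by Lemma \ref{le:SFInterp} this is the super-frame property. For the final ``in particular'' I would invoke the Seshadri-constant lemma established above: taking $k=1$ in its proof, the estimate $\sigma_1>\lfloor\gamma\rfloor-n$, valid for every $n+s<\gamma<\epsilon_0(\mathbb{C}^n/\Lambda_\mathbb{C}^\circ,\pi|z|^2)$, shows that $\epsilon_0>n+s$ forces $\sigma_1(\Lambda_\mathbb{C}^\circ)\ge s$, reducing that statement to the one just proved.

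I expect the only genuine obstacle to be this bookkeeping of weights and conjugations, namely verifying that the nonvanishing prefactor in \eqref{polydualcomp} and the Gaussian weight in the $\mathcal{F}^2_1$ jet normalization cancel cleanly enough that the two interpolation problems carry equivalent sequence spaces. Once that cancellation is checked, everything else is a direct transcription through Lemma \ref{le:SFInterp}.
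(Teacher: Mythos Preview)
Your proposal is correct and follows essentially the same approach as the paper: the paper's argument is a one-line remark that formula \eqref{polydualcomp} converts the jet-interpolation condition defining $\sigma_1(\Lambda_\mathbb C^\circ)\ge s$ into the interpolation condition of Lemma \ref{le:SFInterp}, and you have spelled this out carefully, including the weight-matching between the $\ell^2$ data and the $e^{-\pi|\lambda|^2}$-weighted jet data that the paper leaves implicit. Your treatment of the ``in particular'' via the $k=1$ instance of the Seshadri-constant lemma is likewise what the paper intends.
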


\noindent Similarly if a function $F\in\mathcal{F^\infty}$ fulfills 
$$
(F,\pi^\mathbb C_{\overline{\lambda}}h_\alpha)=0
$$
for each individual $\alpha$, we may see by linear combination of these conditions that we in fact have $\tfrac{\partial^\alpha F}{\partial z^\alpha}|_\lambda=0$. Now we apply Corollary \ref{cor:MWUnique} and lo, we have the following.  

\begin{proposition}
For any $s\in \mathbb N_0$, 
$$
\mu_1(\Lambda_\mathbb C)\le s  
$$
implies that $\Lambda$ induces a multiwindow-Gabor frame with respect to the system  $(\mathbf{h}_s:=(h_\alpha)_{|\alpha|\le s},\Lambda)$. In particular, we have a multiwindow frame when $\lambda(\mathbb C^n/\Lambda_\mathbb C,\pi |z|^2)<s+1$.
\end{proposition}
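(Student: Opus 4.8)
The plan is to read the hypothesis $\mu_1(\Lambda_\mathbb{C}) \le s$ as a jet-uniqueness statement and then transport it across the Bargmann transform by Corollary \ref{cor:MWUnique}. First I would unwind the definition of the uniqueness number. Since imposing the vanishing of all derivatives up to order $s$ becomes a stronger constraint as $s$ grows, the property of being a set of uniqueness for $s$-jets of $\mathcal{F}^\infty_1 = \mathcal{F}^\infty$ is monotone in $s$, so the set of non-uniqueness levels is an initial segment $\{0,\dots,M\}$ and $\mu_1(\Lambda_\mathbb{C}) = M+1$. Hence $\mu_1(\Lambda_\mathbb{C}) \le s$ is exactly the statement that $\Lambda_\mathbb{C}$ is a set of uniqueness for $s$-jets of $\mathcal{F}^\infty$: any $F \in \mathcal{F}^\infty$ with $\frac{\partial^\alpha F}{\partial z^\alpha}|_{\Lambda_\mathbb{C}} = 0$ for all $|\alpha| \le s$ vanishes identically.

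The heart of the argument is the identification of the sampled quantities $(F, \pi^\mathbb{C}_{\overline{\lambda}} \mathcal{B} h_\alpha)$ with the holomorphic jets of $F$ at $\lambda$. Starting from formula (\ref{polydualcomp}) and the definition of the true polyanalytic Bargmann transform, $(F, \pi^\mathbb{C}_{\overline{\lambda}} \mathcal{B} h_\alpha)$ equals a nowhere-vanishing factor times $e^{\pi|z|^2}\frac{d^\alpha}{dz^\alpha}(e^{-\pi|z|^2}F(z))$ evaluated at $z=\lambda$. I would then record the operator identity $e^{\pi|z|^2}\frac{\partial^\alpha}{\partial z^\alpha}e^{-\pi|z|^2} = \left(\frac{\partial}{\partial z} - \pi \overline{z}\right)^\alpha$, obtained by conjugating the holomorphic derivative by the Gaussian weight, so that the vanishing of $(F, \pi^\mathbb{C}_{\overline{\lambda}} \mathcal{B} h_\alpha)$ for all $|\alpha|\le s$ is equivalent to $\left(\frac{\partial}{\partial z} - \pi \overline{\lambda}\right)^\alpha F(\lambda) = 0$ for all $|\alpha|\le s$. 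Since $\frac{\partial}{\partial z_j}$ commutes with multiplication by the antiholomorphic $\overline{z}_j$, expanding the twisted derivative by the binomial rule exhibits it as $\frac{\partial^\alpha F}{\partial z^\alpha}(\lambda)$ plus a linear combination of strictly lower-order jets $\frac{\partial^\beta F}{\partial z^\beta}(\lambda)$, $\beta < \alpha$, with coefficients polynomial in $\overline{\lambda}$. This is a unipotent triangular system, so an induction on $|\alpha|$ shows that the vanishing of all twisted derivatives at $\lambda$ forces the vanishing of all ordinary jets $\frac{\partial^\alpha F}{\partial z^\alpha}(\lambda)$, $|\alpha|\le s$. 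Only this direction is needed; the converse is the same computation read upward.

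Combining the two steps finishes the first assertion: if $F \in \mathcal{F}^\infty$ satisfies $(F, \pi^\mathbb{C}_{\overline{\lambda}} \mathcal{B} h_\alpha) = 0$ for all $\lambda \in \Lambda_\mathbb{C}$ and $|\alpha|\le s$, then all its $s$-jets vanish on $\Lambda_\mathbb{C}$, whence $F \equiv 0$ by the uniqueness reformulation of $\mu_1(\Lambda_\mathbb{C}) \le s$, and Corollary \ref{cor:MWUnique} yields that $(\mathbf{h}_s,\Lambda)$ is a multiwindow Gabor frame. For the ``in particular'' clause I would specialize inequality (\ref{eq:PSTI}) to $k=1$, giving $\lambda_0(\mathbb{C}^n/\Lambda_\mathbb{C},\omega) \ge \mu_1(\Lambda_\mathbb{C})$. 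Thus if $\lambda(\mathbb{C}^n/\Lambda_\mathbb{C},\pi|z|^2) = \lambda_0(\mathbb{C}^n/\Lambda_\mathbb{C},\omega) < s+1$, then $\mu_1(\Lambda_\mathbb{C}) \le \lambda_0 < s+1$, and since $\mu_1(\Lambda_\mathbb{C})$ is a non-negative integer this forces $\mu_1(\Lambda_\mathbb{C}) \le s$; the frame property then follows from the first part.

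I expect the main obstacle to be the second step, and specifically two points within it: making the pairing $(F, \pi^\mathbb{C}_{\overline{\lambda}} \mathcal{B} h_\alpha)$ and formula (\ref{polydualcomp}) rigorous for $F \in \mathcal{F}^\infty$ rather than $\mathcal{F}^2$, for which the formula is originally stated, and carefully verifying the unipotent triangular structure of the twisted-derivative operator so that the passage from vanishing of the sampled quantities to vanishing of the ordinary jets is genuine. Everything else is bookkeeping with the definitions and a single application of (\ref{eq:PSTI}).
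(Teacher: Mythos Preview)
Your proposal is correct and follows essentially the same route as the paper: the paper's proof is the one-sentence observation preceding the proposition that from $(F,\pi^\mathbb{C}_{\overline{\lambda}}\mathcal{B}h_\alpha)=0$ for all $|\alpha|\le s$ one obtains $\tfrac{\partial^\alpha F}{\partial z^\alpha}\big|_\lambda=0$ ``by linear combination of these conditions'' and then invokes Corollary~\ref{cor:MWUnique}, which is exactly your unipotent triangular argument made explicit. Your handling of the ``in particular'' clause via \eqref{eq:PSTI} at $k=1$ is also what the paper intends, and the two technical caveats you flag (extending \eqref{polydualcomp} to $\mathcal{F}^\infty$ and verifying the triangular structure) are genuine but routine.
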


\section{The Case of Transcendental Lattices}
Now that we have frame criteria in terms of the Seshadri constant and pseudoeffective threshold, the objective is to bound these from below and above,  respectively. This however, is not so easy since the magnitudes of these numbers are dependent on the nature of the curves on the manifold for which they are defined. However, if we make an assumption which "removes the geometry" of the tori we examine, there is quite a bit more we can say. This is precisely the condition of a transcendental lattice which we examine further in this section.
\begin{proposition}\label{pr:GL1} If $\mathbb C^n/\Lambda_{\mathbb C}$ has no analytic subvariety of dimension $1\leq d< n$ then $\epsilon_0(\mathbb C^n/\Lambda_\mathbb C,\omega) = \lambda_0(\mathbb C^n/\Lambda_\mathbb C,\omega) = (n!|\Lambda|)^{1/n}$. 
\end{proposition}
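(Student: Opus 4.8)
The plan is to sandwich both invariants between $(n!|\Lambda|)^{1/n}$ from above and below, exploiting that they collapse onto a single cohomological quantity. First I would record the two cheap inputs. A direct computation in the real coordinates $z_j=x_j+iy_j$ gives $\omega=\sum_j dx_j\wedge dy_j$, hence $\int_{\mathbb C^n/\Lambda_{\mathbb C}}\omega^n=n!\,\mathrm{vol}(\mathbb C^n/\Lambda_{\mathbb C})=n!|\Lambda|$, so that $(n!|\Lambda|)^{1/n}=(\{\omega\}^n)^{1/n}$ is the $n$-th root of the volume of the Kähler class. Second, $\epsilon_0\le\lambda_0$ holds unconditionally: if $\phi\in\mathrm{PSH}(X,\omega)$ satisfies $\phi=\gamma\log|z|^2+\mathcal O(1)$ near $0$, then $\nu_0(\phi)=\gamma$, so every competitor for $\epsilon_0$ is a competitor for $\lambda_0$. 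Thus it suffices to prove $\lambda_0\le(n!|\Lambda|)^{1/n}$ and $\epsilon_0\ge(n!|\Lambda|)^{1/n}$.

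For the upper bound I would argue that for any closed positive $(1,1)$-current $T=\omega+dd^c\phi$ in the nef class $\{\omega\}$ the Lelong number obeys $\nu_0(T)^n\le\{\omega\}^n=n!|\Lambda|$: passing to the blow-up $\mu$ at $0$ with exceptional divisor $E$, the class $\mu^*\{\omega\}-\nu_0(T)\{E\}$ is pseudoeffective since $\mu^*T-\nu_0(T)[E]\ge0$, and comparing its volume with $(\mu^*\{\omega\})^{n}-\nu_0(T)^n$ forces $\nu_0(T)^n\le n!|\Lambda|$; taking the supremum yields $\lambda_0\le(n!|\Lambda|)^{1/n}$. Equivalently, and more in the spirit of this paper, one can use the limit formula $\lambda_0=\lim_{k\to\infty}\mu_k(\Lambda_{\mathbb C})/k$ proved above together with a Jensen--Nevanlinna count of the zeros of a nonzero $F\in\mathcal F^\infty_k$ vanishing to order $s$ along $\Lambda_{\mathbb C}$: the density of zeros is at most $k^n$ per unit volume while the imposed vanishing contributes $\binom{n+s}{n}/|\Lambda|$ per unit volume, giving $s^n/n!\lesssim k^n|\Lambda|$ and hence $\mu_k\lesssim k(n!|\Lambda|)^{1/n}$. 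Neither argument uses transcendentality; the upper bound is valid for every torus.

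The crux is the lower bound $\epsilon_0\ge(n!|\Lambda|)^{1/n}$, and this is exactly where the Campana-simple hypothesis enters. The guiding principle is the Kähler analogue of Seshadri's criterion: $\epsilon_0$ is governed by an infimum of the quantities $(\{\omega\}^{\dim Z}\cdot[Z]/\mathrm{mult}_0 Z)^{1/\dim Z}$ over positive-dimensional analytic subvarieties $Z\ni 0$, and when the only such $Z$ is $X$ itself this infimum collapses to $(\{\omega\}^n)^{1/n}=(n!|\Lambda|)^{1/n}$. To realize this analytically I would first show $\epsilon_0=\lambda_0$: let $T$ be an extremal positive current in $\{\omega\}$ with $\nu_0(T)=\lambda_0$; by Siu's semicontinuity theorem each upper level set $E_c(T)=\{\nu_x(T)\ge c\}$ is an analytic subvariety of $X$, and by transcendentality every such set of positive dimension must equal $X$, which is impossible for $c>0$ and a current of finite mass. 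Hence $E_c(T)$ is finite for each $c>0$, i.e. the singularity of $T$ at $0$ is isolated; a Demailly regularization preserving the isolated singularity then upgrades $T$ to a competitor with an honest isolated logarithmic pole, so $\epsilon_0=\lambda_0$. Combining this with the bounds already established would give $(n!|\Lambda|)^{1/n}\le\epsilon_0=\lambda_0\le(n!|\Lambda|)^{1/n}$ once the lower bound $\lambda_0\ge(n!|\Lambda|)^{1/n}$ is in hand.

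The remaining lower bound $\lambda_0\ge(n!|\Lambda|)^{1/n}$ is the genuine obstacle, and the naive route of counting sections of a theta line bundle fails precisely because a transcendental torus is non-algebraic and carries no sections; this is why the preceding lemmas work upstairs on $\mathbb C^n$ with the growth spaces $\mathcal F^\infty_k$ and $\mathcal F^2_k$. I would therefore use $\epsilon_0=\lim_{k\to\infty}\sigma_k(\Lambda_{\mathbb C})/k$ and construct, for $k$ large and any $s<k(n!|\Lambda|)^{1/n}(1-\delta)$, an $s$-jet interpolating function in $\mathcal F^2_k$ by the Ohsawa--Takegoshi plus H\"ormander scheme of the Seshadri lemma above, the point being that under transcendentality there is no positive-dimensional locus that could obstruct the curvature positivity needed to solve $\overline\partial$; this yields $\sigma_k\ge k(n!|\Lambda|)^{1/n}(1-\delta)$ and hence $\epsilon_0\ge(n!|\Lambda|)^{1/n}$. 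Alternatively one may quote \cite[Thm.~1.1]{LW} directly, of which this proposition is the stated reformulation. The hardest point, where I would spend the most care, is making the Siu/subvariety dichotomy interact correctly with the non-compactness of $\mathbb C^n$ and with the extremal envelope, so that \emph{no proper subvariety} really does force the singularity to concentrate at the single point $0$ with the maximal admissible coefficient.
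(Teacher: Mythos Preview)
Your route to $\epsilon_0=\lambda_0$ is essentially the paper's: the paper applies Demailly's approximation theorem to an arbitrary $\omega$-PSH function $G$, obtaining $G_m$ with \emph{analytic} singularities and $dd^cG_m\ge-(1+\varepsilon_m)\omega$; transcendentality then forces those singular loci to be zero-dimensional, so each $G_m/(1+\varepsilon_m)$ is already a competitor for the Seshadri constant, and letting $m\to\infty$ gives $\epsilon_0\ge\nu_0(G)$ for every $G$, hence $\epsilon_0\ge\lambda_0$. Your detour through Siu's upper-level sets is harmless but unnecessary: Demailly regularization already hands you analytic singularities, and that is all transcendentality needs to bite. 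Your upper bound $\lambda_0\le(n!|\Lambda|)^{1/n}$ via the blow-up volume is correct and standard; the paper does not isolate it because it follows from the citation below.

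The genuine gap is the lower bound $\epsilon_0\ge(n!|\Lambda|)^{1/n}$. Your proposed argument via $\sigma_k$ is circular: the Ohsawa--Takegoshi/H\"ormander scheme in the preceding lemma takes as \emph{input} an $\omega$-PSH function with an isolated logarithmic pole of strength $\gamma$, and that is precisely the object whose existence is encoded in $\epsilon_0\ge\gamma$. The sentence ``under transcendentality there is no positive-dimensional locus that could obstruct the curvature positivity needed to solve $\overline\partial$'' misidentifies the role of the hypothesis---curvature positivity is a pointwise Hessian condition on the weight, not a statement about subvarieties; transcendentality controls the \emph{shape} of singularities, not the sign of $dd^c$. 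What is actually needed here is a construction of an $\omega$-PSH function with Lelong number arbitrarily close to $(n!|\Lambda|)^{1/n}$, and that is nontrivial analytic input: the paper invokes Demailly's mass-concentration trick (via \cite[Theorem~2.8]{Tosatti1}), which uses the Calabi--Yau theorem to solve a complex Monge--Amp\`ere equation with volume form concentrated near $0$. Quoting \cite{LW} instead, as you suggest, is acceptable but somewhat self-referential in this paper; either way, you should name the real mechanism (Calabi--Yau/mass concentration) rather than the $\overline\partial$ scheme, which cannot bootstrap itself.
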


\begin{proof} 

Consider an $\omega$-PSH function $G$ with nonzero Lelong number at the lattice point. We clearly have that $dd^cG(z)$ is bounded from below by $-\omega$, meaning we may use the approximation result of Demailly for quasi-psh functions (\cite[Proposition 3.7]{Dem92}) to find a sequence of quasi-PSH functions $(G_m)_m$ which converge to $G$, such that $G_m>G$, have only analytic singularities, and fulfill the estimate:

$$
\nu_x(G)-\frac{n}{m}\leq \nu_x(G_m) \leq \nu_x(G),\; x \in \mathbb C^n/\Lambda_{\mathbb C}
$$
for the Lelong numbers of $G$ and $G_m$. We have the estimate for $G_m$
$$
dd^cG_m \geq -(1+\varepsilon_m)\omega,
$$
for some sequence of numbers $(\varepsilon_m)_m$ which decreases to $0$.
\\~~\\
 Assume now that $\mathbb C^n/\Lambda_{\mathbb C}$ has no analytic subvariety of dimension $1\leq d< n$. This then implies that the singularities of the $G_m$ must be isolated. Thus we can bound the Seshadri constant of $\mathbb C^n/\Lambda_{\mathbb C}$ from below by the Lelong numbers of $\frac{G_m}{1+\varepsilon_m}$  at the point corresponding to the origin.  Since these are bounded from below by $\frac{\nu_0(G)-{(n/m)}}{1+\varepsilon_m}$, taking the limit as $m$ approaches infinity, we have that the Seshadri constant of $\mathbb C^n/\Lambda_{\mathbb C}$ is no less than the Lelong number of $G$. Since the only restriction on the Lelong number of $G$ is that it is less than the pseudoeffective threshold, we get that the Seshadri constant is no smaller than the pseudoeffective threshold. 
 \\~~\\
 Demailly's mass concentration trick also implies that (see \cite[Theorem 2.8]{Tosatti1}) $\epsilon_0(\mathbb C^n/\Lambda_\mathbb C)=(n!|\Lambda|)^{1/n}$.
\end{proof}

To check that $\mathbb C^n/\Lambda_{\mathbb C}$ has no analytic subvariety of dimension $1\leq d< n$ we need the following result (the proof follows directly from the argument in \cite[pp.164-165]{Sha}).

\begin{lemma}\label{le:sha} Let $\Gamma$ be a lattice in $\mathbb C^n$. Assume that the following linear mapping from singular homology to functionals on Dolbeault cohomology
\begin{equation}\label{eq:sha}
{\rm int}_\Gamma: \oplus_{1\leq k< n} H_{2k} (\mathbb C^n/\Gamma, \mathbb Z) \to \oplus_{p+q=2k, \,1\leq k< n, \,p>q\geq 0} (H^{p,q} (\mathbb C^n/\Gamma, \mathbb C))^*
\end{equation}
defined by
$$
{\rm int}_\Gamma(S)(\alpha):=\int_S \alpha
$$
is injective. Then $\mathbb C^n/\Gamma$ has no analytic subvariety of dimension $1\leq d< n$.
\end{lemma}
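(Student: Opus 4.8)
The plan is to prove the contrapositive: assuming $X:=\mathbb C^n/\Gamma$ carries an analytic subvariety $Y$ of complex dimension $d$ with $1\le d<n$, I will exhibit a nonzero element in the kernel of $\mathrm{int}_\Gamma$, contradicting injectivity. The natural candidate is the fundamental class $[Y]\in H_{2d}(X,\mathbb Z)$ carried by $Y$. Since $1\le d<n$, this class lies in the summand of the domain of $\mathrm{int}_\Gamma$ indexed by $k=d$, so it is legitimate to test it against the functionals defining $\mathrm{int}_\Gamma$.

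First I would check that $\mathrm{int}_\Gamma([Y])=0$. Because $X$ is a (K\"ahler) torus, I represent every Dolbeault class by its constant-coefficient harmonic representative, which is $d$-closed; then $\int_{[Y]}\alpha=\int_{Y_{\mathrm{reg}}}\alpha$ is well defined and independent of the chosen representative. For $\alpha$ of pure bidegree $(p,q)$, its pullback to the $d$-dimensional complex manifold $Y_{\mathrm{reg}}$ is again of type $(p,q)$, and any nonzero form of top degree $2d$ on $Y_{\mathrm{reg}}$ must have bidegree $(d,d)$; hence $\int_{Y_{\mathrm{reg}}}\alpha=0$ whenever $p\ne q$. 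The target of $\mathrm{int}_\Gamma$ involves only classes with $p>q\ge 0$, so in particular $p\ne q$ throughout, and therefore $\mathrm{int}_\Gamma([Y])=0$. Next I would confirm $[Y]\ne 0$: since $X$ is K\"ahler with form $\omega$, the Wirtinger identity gives $\int_{Y_{\mathrm{reg}}}\omega^{d}>0$, so $[Y]$ pairs nontrivially with the $(d,d)$-class $[\omega^d]$ and is thus already nonzero in $H_{2d}(X,\mathbb R)$, a fortiori as an integral class. Combining the two facts yields a nonzero class killed by $\mathrm{int}_\Gamma$, the desired contradiction.

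I expect the only real obstacle to be the technical treatment of the singularities of $Y$: one must justify that a possibly singular compact analytic set defines a genuine integral homology class and that integration of smooth forms over $Y_{\mathrm{reg}}$ converges and is compatible with the de Rham/Dolbeault pairing used to define $\mathrm{int}_\Gamma$. I would dispatch this via Lelong's theory of integration over analytic sets (the current of integration along $Y$ is closed, has finite mass, and represents the class $[Y]\in H_{2d}(X,\mathbb Z)$), or equivalently by passing to a resolution $\widetilde Y\to Y$ and integrating upstairs, which leaves the bidegree-vanishing argument untouched. Once this foundational point is secured, both the type computation and the Wirtinger nonvanishing are immediate, so essentially the entire content of the lemma reduces to the standard fact that a compact analytic cycle gives a nonzero homology class annihilating every $(p,q)$-class with $p\ne q$.
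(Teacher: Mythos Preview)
Your argument is correct and is precisely the classical argument referenced by the paper, which does not spell out a proof but simply points to \cite[pp.~164--165]{Sha}: the contrapositive via the fundamental class of a subvariety, bidegree vanishing of $(p,q)$-forms with $p\neq q$ on a $d$-dimensional complex space, and nontriviality of $[Y]$ via $\int_Y\omega^d>0$. Your handling of possible singularities of $Y$ through the current of integration (or a resolution) is the standard way to make this rigorous and is more explicit than what the paper provides.
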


Let us give a more elementary description of this condition. The lattice $\Gamma$ is given by
$$
\Gamma = \mathbb{Z}e_1 + ... + \mathbb{Z}e_{2n},
$$
where $e_1,...,e_{2n}\in\mathbb{C}^n$ are vectors which are linearly independent over $\mathbb{R}$. In this case, the homology groups $H_{2k} (\mathbb C^n/\Gamma, \mathbb Z)$ have bases over $\mathbb Z$ given by
$$
S_i := \pi(\mathbb R e_{i_1} + ... + \mathbb R e_{i_{2k}}),
$$
for the multi-index $i=(i_1,...,i_{2k}) \in \mathbb N ^{2k}$ with $i_j<i_l$ for $j<l$. (For the sake of convenience we will denote the set of such indices by $I_k$). Observe that the map $\pi$ is a projection from $\mathbb C^n$ onto the complex torus $X$. As such, we may now write any cycle $S \in H_{2k} (\mathbb C^n/\Gamma, \mathbb Z)$ as 
$$
S=\sum\limits_{i\in I_k} a_iS_i,
$$
for some integers $a_i$. This fact should also help explain the singular homology groups on the Torus for those who are unfamiliar: $H_{2k}(\mathbb C^n/\Gamma,\mathbb Z)$ can be thought of as the free Abelian group over the closed "curves" of real dimension $2k$ (modulo some equivalence relation which will not be relevant in what follows). Now, for a $(p,q)$-form $\alpha$, where $p+q=k$ and $p>q\geq0$, we have that 
$$
 {\rm int}_\Gamma(S)(\alpha) = \sum\limits_{i\in I_k}a_i\int_{S_i}\alpha.
$$
Hence the injectivity of ${\rm int}_\Gamma(S)$ is equivalent to the following statement: 
\\~\\
If for every closed $(p,q)$ form $\alpha$, where $p+q=k$, $p>q\geq0$ ${\rm int}_\Gamma(S)(\alpha)=0$, the numbers $a_i$ must all be 0. The reader should of course note that this condition must also be checked for every $k$ less than $n$. Furthermore,  by linearity  it  suffices to check this condition on a a basis of $H^{p,q}(\mathbb C^n / \Gamma,\mathbb C)$.
\\~\\
We now turn to the case where $n=2$. Hence we need only to check the case $k=1$, which also forces $p=2$ and $q=0$. Additionally, the $(2,0)$ forms are spanned by the single form induced by $dz_1\wedge dz_2$. As such, the condition that ${\rm int}_\Gamma(S)(\alpha)$ is injective can be expressed as the condition that the numbers
$$
\int_{S_i}\pi_*(dz_1\wedge dz_2)
$$
are linearly independent over $\mathbb Z$. Explicitly computing these integrals gives the following result:

\medskip

\noindent
\textbf{Remark.}  \emph{In case $n=2$ and $\Gamma$ generated by
$e_j=(\alpha_j, \beta_j)$, $1\leq j\leq 4$, over $\mathbb Z$, then \cite[p.165]{Sha} tells us that $\ker {\rm int}_\Gamma=0$ if and only if 
$$
\alpha_j\beta_k-\alpha_k\beta_j, \ \\ 1\leq j<k\leq 4,
$$
are linearly independent over $\mathbb Z$. }

\medskip

For $n>2$, we have way more conditions. When we examine the case $k>1$, we must not only look at the group $H^{2k,0} (\mathbb C^2/\Gamma, \mathbb C)$, but also  the groups $H^{2k-1,1} (\mathbb C^2/\Gamma, \mathbb C)$ and so on. The group $H^{p,q} (\mathbb C^2/\Gamma, \mathbb C)$ has a basis given by the forms
$$
\pi_*(dz_P\wedge d\overline{z}_Q), \; P\in\mathbb N^p,Q\in\mathbb N^q,
$$
when we take $P$ and $Q$ to be ascending. We can compute the integrals
$$
\int_{S_i}\pi_*(dz_P\wedge d\overline{z}_Q)
$$
in a manner similar to the one which gives us the statement in the previous remark, and see that these are the numbers
$$
C^i_{P,Q}:=\mathrm{det}
\left(
\begin{matrix}
e_{i_1,P_1}&...&e_{i_1,P_p} &\overline{e}_{i_1,Q_1}&...&\overline{e}_{i_1,Q_q}\\
.& & & & &.\\
.& & & & &.\\
.& & & & &.\\
e_{i_{2k},P_1}&...&e_{i_{2k},P_p} &\overline{e}_{i_{2k},Q_1}&...&\overline{e}_{i_{2k},Q_q}
\end{matrix}
\right).
$$
We may now reformulate our criterion for the nonexistence of subvarieties of the complex torus in the following way.

\begin{lemma}
We have $\rm{ker}$ $\rm int_{\Gamma} =0$ if and only if for every $0<k<n$ there exists no set of integers $\{a_i\}_{i\in I_k} \subseteq \mathbb Z$ not all equal to $0$ such that for all ascending multi-indices $P,Q$ where $(P,Q)\in \mathbb N^{2k}$ and $P$ has more entries than $Q$,
$$
\sum\limits_{i\in I_k}a_iC^i_{P,Q}=0.
$$
\end{lemma}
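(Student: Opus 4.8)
The plan is to reduce the abstract injectivity of ${\rm int}_\Gamma$ to a concrete homogeneous linear system in the integers $a_i$ by fixing explicit bases on the source and target and computing the resulting pairing matrix, whose entries are exactly the determinants $C^i_{P,Q}$.

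First I would observe that ${\rm int}_\Gamma$ respects the grading by homological dimension and therefore block-diagonalizes. Indeed, if $\alpha$ is a closed $(p,q)$-form with $p+q=2k$ and $S_i$ is a cycle of real dimension $2k'$ with $k'\neq k$, then $\int_{S_i}\alpha = 0$ for pure degree reasons, since the pullback of $\alpha$ to $S_i$ either exceeds the top degree or fails to be a top form. Hence the summand $H_{2k}(\mathbb C^n/\Gamma,\mathbb Z)$ of the source is paired nontrivially only with those $(H^{p,q})^*$ in the target for which $p+q=2k$. Consequently $\ker {\rm int}_\Gamma = 0$ holds if and only if the restriction of ${\rm int}_\Gamma$ to each summand $H_{2k}(\mathbb C^n/\Gamma,\mathbb Z)$ is injective, for every $1\leq k<n$, so it suffices to analyze a single fixed $k$.

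Next, for fixed $k$ I would expand in the bases already recorded above. Writing a class $S\in H_{2k}(\mathbb C^n/\Gamma,\mathbb Z)$ as $S=\sum_{i\in I_k}a_iS_i$, and using that the forms $\pi_*(dz_P\wedge d\overline z_Q)$ with $P,Q$ ascending, $|P|=p>q=|Q|$ and $p+q=2k$, span the relevant Dolbeault groups $H^{p,q}(\mathbb C^n/\Gamma,\mathbb C)$, the functional ${\rm int}_\Gamma(S)$ vanishes identically if and only if it vanishes on every such basis form. By linearity of the integral, ${\rm int}_\Gamma(S)\big(\pi_*(dz_P\wedge d\overline z_Q)\big)=\sum_{i\in I_k}a_i\int_{S_i}\pi_*(dz_P\wedge d\overline z_Q)$, and the integral $\int_{S_i}\pi_*(dz_P\wedge d\overline z_Q)$ is precisely the determinant $C^i_{P,Q}$ computed just above. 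Thus $S\in\ker\big({\rm int}_\Gamma|_{H_{2k}}\big)$ if and only if $\sum_{i\in I_k}a_iC^i_{P,Q}=0$ for all ascending $(P,Q)\in\mathbb N^{2k}$ with $|P|>|Q|$. Injectivity of this block is therefore the statement that the only integer tuple $\{a_i\}_{i\in I_k}$ solving this homogeneous system is the zero tuple; conjoining this over all $0<k<n$ yields the lemma, where the requirement that $P$ has more entries than $Q$ is exactly the bookkeeping of the constraint $p>q\geq 0$ in the target of ${\rm int}_\Gamma$.

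The hard part of this argument has already been dispatched in the preceding computation, namely the identification $\int_{S_i}\pi_*(dz_P\wedge d\overline z_Q)=C^i_{P,Q}$ together with the standard fact that the constant-coefficient forms $\pi_*(dz_P\wedge d\overline z_Q)$ furnish a basis of $H^{p,q}$ on the torus; once these are in hand the rest is elementary linear algebra. One small point worth flagging is that, because the $a_i$ are real and each $S_i$ is a real subtorus, the equations arising from type $(q,p)$ are the complex conjugates of those from type $(p,q)$ and hence redundant, which is consistent with the map pairing only against forms with $p>q$.
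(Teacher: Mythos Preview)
Your argument is correct and mirrors exactly the discussion the paper places immediately before the lemma: you pick the basis $S_i$ of $H_{2k}$, expand $S=\sum_i a_i S_i$, test against the basis forms $\pi_*(dz_P\wedge d\bar z_Q)$, and use the already-computed identity $\int_{S_i}\pi_*(dz_P\wedge d\bar z_Q)=C^i_{P,Q}$ to rewrite injectivity as the stated integer linear-independence condition. The paper treats the lemma as a reformulation of that discussion rather than giving a separate proof, so your write-up simply spells out the same steps (with the additional, harmless observation about block-diagonality in $k$ and conjugate redundancy for $(q,p)$ versus $(p,q)$).
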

 
This immediately gives the following sufficient criterion which may be easier to check than the original condition. 

\begin{corollary}
$\rm{ker} \,\rm int_{\Gamma} =0$ if for every $0<k<n$ there exist multi-indices $P,Q$ where $(P,Q)\in \mathbb N^{2k}$ and $P$ has more entries than $Q$ are such that the numbers $\{C^i_{P,Q}\}_{i\in I_k}$ are linearly independent over $\mathbb Z$.
\end{corollary}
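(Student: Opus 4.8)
The plan is to obtain the corollary directly from the immediately preceding Lemma, which already characterizes $\ker {\rm int}_\Gamma = 0$ as the nonexistence, for every $0<k<n$, of a nontrivial integer tuple $\{a_i\}_{i\in I_k}$ that annihilates the numbers $C^i_{P,Q}$ simultaneously over \emph{all} admissible pairs $(P,Q)$. The key observation is that the hypothesis stated here is a strict strengthening of that criterion: rather than asking that no nonzero tuple satisfy the entire system, it suffices to exhibit, for each $k$, a \emph{single} pair $(P,Q)$ whose coefficients $\{C^i_{P,Q}\}_{i\in I_k}$ admit no nontrivial integer relation.

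First I would fix an arbitrary $k$ with $0<k<n$ and select the pair $(P,Q)$ furnished by the hypothesis, for which $\{C^i_{P,Q}\}_{i\in I_k}$ are linearly independent over $\mathbb Z$. To verify the condition in the Lemma I would then suppose that some integer tuple $\{a_i\}_{i\in I_k}$ satisfies $\sum_{i\in I_k} a_i C^i_{P',Q'}=0$ for every admissible $(P',Q')$. Specializing $(P',Q')$ to the chosen pair $(P,Q)$ yields the single relation $\sum_{i\in I_k} a_i C^i_{P,Q}=0$, and linear independence over $\mathbb Z$ forces every $a_i$ to vanish. Hence there is no nonzero annihilating tuple at level $k$.

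Since $k\in\{1,\dots,n-1\}$ was arbitrary, the Lemma's criterion is met for all $0<k<n$, and therefore $\ker {\rm int}_\Gamma = 0$. I do not expect any genuine obstacle: the entire content of the corollary is that testing one well-chosen system of period integrals already pins down the coefficients, so the remaining equations of the full system are satisfied only by the zero tuple. The one point worth flagging is that the distinguished pair $(P,Q)$ is allowed to depend on $k$, which is exactly what the phrasing \emph{``for every $0<k<n$ there exist multi-indices $P,Q$''} already permits; no uniformity across $k$ is needed.
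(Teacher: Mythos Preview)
Your argument is correct and matches the paper's approach exactly: the paper states that the Lemma ``immediately gives'' the corollary and offers no further proof, and your derivation spells out precisely this immediate implication.
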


In general,  Lemma \ref{le:sha} gives the following explicit version of \cite[Thm 1.1]{LW}.

\begin{theorem}[{Theorem \ref{th:mainthm}}]\label{th:extra} We have that: 
\begin{enumerate}
\item[(1)] $(\mathbf h_s, \Lambda)$ is a multiwindow-Gabor frame for $L^2(\mathbb R^n)$ if $|\Lambda| <\frac {s^n}{n!}$ and $\ker {\rm int}_{\Lambda_{\mathbb C}} =0$.

\item[(2)] $(\mathbf h_s, \Lambda)$ is a super Gabor frame for $L^2(\mathbb R^n, \mathbb C^{\mathfrak s})$ if $|\Lambda| <\frac{n!}{(n+s)^n}$ and $\ker {\rm int}_{\Lambda^\circ_{\mathbb C}} =0$.
\end{enumerate}
In particular, when $s=0$, both (1) and (2) will give a criterion for $(e^{-\pi |t|^2},\Lambda)$ to be a Gabor frame.
\end{theorem}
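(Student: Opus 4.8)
The plan is to assemble the final theorem from the three machines already built earlier in the paper, with essentially no new analysis required: the homological injectivity criterion of Lemma \ref{le:sha}, the identification of the Seshadri constant and pseudoeffective threshold with an explicit density in Proposition \ref{pr:GL1}, and the two frame criteria phrased through $\lambda_0$ and $\epsilon_0$. The one extra elementary fact I would invoke is the covolume duality $|\Lambda|\cdot|\Lambda^\circ| = 1$ for the symplectic dual lattice, which follows since $\Lambda^\circ$ differs from the Euclidean dual by a volume-preserving symplectic map.

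For part (1), first I would use the hypothesis $\ker {\rm int}_{\Lambda_\mathbb C} = 0$ to invoke Lemma \ref{le:sha}, concluding that the torus $\mathbb C^n/\Lambda_\mathbb C$ has no analytic subvariety of dimension $1 \le d < n$. This is exactly the hypothesis of Proposition \ref{pr:GL1}, which then delivers the exact value $\lambda_0(\mathbb C^n/\Lambda_\mathbb C,\omega) = (n!|\Lambda|)^{1/n}$. The density hypothesis $|\Lambda| < (s+1)^n/n!$ is equivalent to $(n!|\Lambda|)^{1/n} < s+1$, i.e. to $\lambda_0(\mathbb C^n/\Lambda_\mathbb C,\omega) < s+1$. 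Since $\lambda_0 \ge \mu_1(\Lambda_\mathbb C)$ by \eqref{eq:PSTI} and $\mu_1$ is a nonnegative integer, $\lambda_0 < s+1$ forces $\mu_1(\Lambda_\mathbb C) \le s$; feeding this into the earlier proposition that guarantees the multiwindow Gabor frame property whenever $\mu_1(\Lambda_\mathbb C) \le s$ (equivalently $\lambda_0 < s+1$) closes this case and recovers the bound of Theorem \ref{th:mainthm}.

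For part (2), the argument runs in parallel but transported to the dual torus via Ron--Shen duality. From $\ker {\rm int}_{\Lambda^\circ_\mathbb C} = 0$, Lemma \ref{le:sha} and Proposition \ref{pr:GL1} applied to $\Lambda^\circ_\mathbb C$ yield $\epsilon_0(\mathbb C^n/\Lambda^\circ_\mathbb C,\omega) = (n!|\Lambda^\circ|)^{1/n}$. Here I would insert the duality $|\Lambda^\circ| = 1/|\Lambda|$, under which the hypothesis $|\Lambda| < n!/(n+s)^n$ becomes $|\Lambda^\circ| > (n+s)^n/n!$, that is $(n!|\Lambda^\circ|)^{1/n} > n+s$, i.e. $\epsilon_0(\mathbb C^n/\Lambda^\circ_\mathbb C,\omega) > n+s$. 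Invoking the earlier proposition that guarantees the super Gabor frame property whenever $\epsilon_0(\mathbb C^n/\Lambda^\circ_\mathbb C,\omega) > n+s$ (equivalently $\sigma_1(\Lambda^\circ_\mathbb C) \ge s$) finishes the proof, and the $s=0$ specialization is immediate since $\mathbf h_0$ is then the single Gaussian and both frame notions collapse to the ordinary Gabor frame.

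Since every genuinely analytic ingredient, namely Ohsawa--Takegoshi extension, the H\"ormander estimates, and Demailly's approximation and mass-concentration, has already been spent in proving Proposition \ref{pr:GL1} and the two frame criteria, the only remaining obstacle is bookkeeping. I must keep the normalization of $\omega$ as the lift of $dd^c\pi|z|^2$, the convention for the covolume $|\Lambda|$, and the symplectic-dual relation $|\Lambda|\cdot|\Lambda^\circ| = 1$ mutually consistent, so that the strict inequalities transfer correctly through the identity $\epsilon_0 = \lambda_0 = (n!|\Lambda|)^{1/n}$ without gaining or losing a factorial factor. A convenient consistency check is the case $n=1$, where both density bounds reduce to $|\Lambda| < 1$, recovering the classical density threshold for Gaussian Gabor frames.
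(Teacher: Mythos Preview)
Your proposal is correct and is precisely the assembly the paper intends: the paper does not give a separate proof block for Theorem \ref{th:extra} but presents it as the direct consequence of Lemma \ref{le:sha}, Proposition \ref{pr:GL1}, and the two frame propositions in Section 5, and your write-up spells out exactly this chain. Note that you have (rightly) used the bound $|\Lambda|<(s+1)^n/n!$ from Theorem \ref{th:mainthm} rather than the $s^n/n!$ appearing in the restatement; the latter is a typo, since with $s^n$ the condition is vacuous at $s=0$ and the ``in particular'' clause would fail.
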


\noindent
\textbf{Remark.}  \emph{Almost all lattices in $\mathbb R^{2n}$ satisfy $\ker {\rm int}_{\Lambda_{\mathbb C}} =0$ and $\ker {\rm int}_{\Lambda^\circ_{\mathbb C}} =0$ (as we will show later on), hence the above theorem gives an effective Gabor frame criterion in terms of the covolume for almost all lattices. }

\noindent
\begin{corollary} \label{cor:RUZ}
In case $\Lambda = \mathbb Z^2 \times A \mathbb Z^2$, where $A$ is a real linear mapping defined by
$$
A(x,y):=(ax+by, cx+dy),
$$
then we have that 
$$
\ker {\rm int}_{\Lambda_{\mathbb C}} =0 \Leftrightarrow\ker {\rm int}_{\Lambda^\circ_{\mathbb C}} =0 \Leftrightarrow ad-bc \notin \mathbb Q, \ a,b,c,d \, \text{are $\mathbb Z$-linearly independent.} 
$$
Such a lattice then induces a Gabor frame if additionally, $|ad-bc|<\frac{1}{2}$. 
\end{corollary}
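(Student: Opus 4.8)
The plan is to reduce everything to the explicit two-dimensional criterion recorded in the Remark following Lemma \ref{le:sha}: for a lattice in $\mathbb{C}^2$ with generators $e_j=(\alpha_j,\beta_j)$, $1\le j\le 4$, one has $\ker {\rm int}=0$ if and only if the six minors $\alpha_j\beta_k-\alpha_k\beta_j$ ($1\le j<k\le 4$) are linearly independent over $\mathbb{Z}$. So the entire statement becomes a bookkeeping exercise once the generators of $\Lambda_{\mathbb C}$ and of $\Lambda^\circ_{\mathbb C}$ are written down.

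First I would read $\Lambda=\mathbb{Z}^2\times A\mathbb{Z}^2$ as $\{(\xi,x):\xi\in\mathbb{Z}^2,\ x\in A\mathbb{Z}^2\}$ and apply $x+i\xi$ to obtain the generators $e_1=(i,0)$, $e_2=(0,i)$, $e_3=(a,c)$, $e_4=(b,d)$ of $\Lambda_{\mathbb C}$. A direct computation gives the six minors $-1$, $ic$, $id$, $-ia$, $-ib$, and $ad-bc$. I would then test a candidate integer relation $\sum_{j<k} m_{jk}(\alpha_j\beta_k-\alpha_k\beta_j)=0$ by splitting into real and imaginary parts: the real part reads $-m_{12}+m_{34}(ad-bc)=0$ and the imaginary part reads $m_{13}c+m_{14}d-m_{23}a-m_{24}b=0$. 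The real equation admits only the trivial integer solution exactly when $ad-bc\notin\mathbb{Q}$ (otherwise clearing the denominator of $ad-bc$ produces a nontrivial relation), while the imaginary equation admits only the trivial solution exactly when $a,b,c,d$ are linearly independent over $\mathbb{Z}$. This establishes $\ker {\rm int}_{\Lambda_{\mathbb C}}=0\Leftrightarrow ad-bc\notin\mathbb{Q}$ and $a,b,c,d$ are $\mathbb{Z}$-independent.

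Next I would compute the symplectic dual from its defining congruences, which gives $\Lambda^\circ=(A^T)^{-1}\mathbb{Z}^2\times\mathbb{Z}^2$, so that $\Lambda^\circ_{\mathbb C}$ has generators $f_1=\tfrac{1}{ad-bc}(d,-b)$, $f_2=\tfrac{1}{ad-bc}(-c,a)$, $f_3=(i,0)$, $f_4=(0,i)$. Its six minors turn out to be $\tfrac{1}{ad-bc}$, $\tfrac{ib}{ad-bc}$, $\tfrac{id}{ad-bc}$, $-\tfrac{ia}{ad-bc}$, $-\tfrac{ic}{ad-bc}$, and $-1$; after multiplying a candidate relation through by $ad-bc$, the real/imaginary split is structurally identical to the one above (the determinant again being the only ``real'' coefficient, and $a,b,c,d$ again filling the imaginary slot). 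Hence the same two conditions characterize $\ker {\rm int}_{\Lambda^\circ_{\mathbb C}}=0$, which yields the two claimed equivalences simultaneously.

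Finally, for the frame conclusion I would observe that the covolume of the product lattice is $|\Lambda|=|{\det A}|=|ad-bc|$. Under the transcendentality hypotheses we have $\ker {\rm int}_{\Lambda_{\mathbb C}}=0$, so Theorem \ref{th:mainthm}(1) applied with $s=0$ (threshold $\tfrac{(s+1)^n}{n!}=\tfrac12$ for $n=2$) produces the Gaussian Gabor frame precisely when $|ad-bc|<\tfrac12$. I expect the only genuinely delicate step to be the symplectic-dual computation together with the real/imaginary bookkeeping: the crux is verifying that the determinant $ad-bc$ reappears as the sole real minor for both $\Lambda_{\mathbb C}$ and $\Lambda^\circ_{\mathbb C}$, since this is exactly what forces the two transcendentality conditions to coincide.
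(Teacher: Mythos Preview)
Your proof is correct and follows the same route as the paper's: write down generators of $\Lambda_{\mathbb C}$, compute the six $2\times 2$ minors, and invoke the two-dimensional criterion from the Remark after Lemma~\ref{le:sha}. You supply considerably more detail than the paper (the explicit real/imaginary split and the full dual-lattice computation, where the paper merely says ``the case of the symplectic dual can be verified in the same way''), and your generators differ from the paper's by a harmless global factor of $-i$, which does not affect $\mathbb Z$-independence of the minors.
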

We remark that the examples in Theorem 1.5 in \cite{RUZ} do not satisfy our assumptions: all $a,b,c,d$ are dependent over $\mathbb Z$, and $|ad-bc|$ need not be less than ${1}/{2}$. However, as shown in \cite{RUZ}, these lattices still form Gabor frames.

\begin{proof}
    In this case the complexified  lattice $\Lambda_{\mathbb C}$ is generated by the vectors
    \begin{align*}
        \begin{pmatrix}
            1\\
            0
        \end{pmatrix},
        \begin{pmatrix}
            0\\
            1
        \end{pmatrix},
        \begin{pmatrix}
            ia\\
            ic
        \end{pmatrix},
        \begin{pmatrix}
            ib\\
            id
        \end{pmatrix}.
    \end{align*}
    We may now apply the two dimensional version of our criterion for a transcendental lattice which was given in an earlier remark to see that $\Lambda$ is transcendental precisely when the determinants $1$, $bc-ad$, $ia$, $ib$, $ic$, and $id$ are linearly independent over $\mathbb Z$. This happens precisely when the condition given in the statement of the Lemma is fulfilled. The case of the symplectic dual can be verified in the same way.
\end{proof}

\medskip

We will now prove the following genericity result:
\begin{theorem}
Transcendental tori are generic with respect to the choice of the lattice.
\end{theorem}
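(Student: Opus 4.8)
The plan is to read \emph{generic} in the measure-theoretic (equivalently Baire-category) sense. I identify the space of lattices with an open subset of the space of $2n\times n$ complex matrices $E=(e_{i,j})$ whose rows $e_1,\dots,e_{2n}$ are $\mathbb R$-linearly independent, and show that the set of $E$ with $\ker\mathrm{int}_{\Lambda_{\mathbb C}}\neq 0$ has Lebesgue measure zero and is meager. Since by Lemma \ref{le:sha} the condition $\ker\mathrm{int}_{\Lambda_{\mathbb C}}=0$ already forces $\mathbb C^n/\Lambda_{\mathbb C}$ to carry no proper positive-dimensional analytic subvariety, genericity of this algebraic condition immediately yields genericity of transcendentality; the statement for the symplectic dual $\Lambda^\circ_{\mathbb C}$ follows by the identical argument applied to the dual matrix.

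By the combinatorial reformulation established above, $E$ is \emph{bad} (that is, $\ker\mathrm{int}_{\Lambda_{\mathbb C}}\neq 0$) exactly when there exist some $0<k<n$ and integers $\{a_i\}_{i\in I_k}$, not all zero, with $\sum_{i\in I_k}a_iC^i_{P,Q}(E)=0$ for every admissible pair $(P,Q)$. For a \emph{fixed} $k$ and a \emph{fixed} nonzero integer vector $\{a_i\}$, I would produce one admissible $(P,Q)$ for which the real-analytic function $E\mapsto\sum_i a_iC^i_{P,Q}(E)$ is not identically zero; its zero locus is then a proper real-analytic subvariety, hence null and nowhere dense. The bad set is contained in the union of these loci over all $0<k<n$ and all nonzero $\{a_i\}\in\mathbb Z^{I_k}$, a \emph{countable} union of null (resp.\ meager) sets, so it is null (resp.\ meager) and the transcendental lattices are conull and comeager.

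The heart of the proof is therefore a single linear-independence statement: for each $0<k<n$ there is an admissible $(P,Q)$ for which the family $\{C^i_{P,Q}\}_{i\in I_k}$ is linearly independent over $\mathbb C$ as functions of $E$. I would take $p=\min(2k,n)$, $q=2k-p$, with $P=(1,\dots,p)$ and $Q=(1,\dots,q)$; one checks $p>q\ge 0$ and $p,q\le n$ precisely because $k<n$. To prove independence I treat the entries $e_{i,j}$ and their conjugates $\overline{e}_{i,j}$ as formally independent variables, which is legitimate since distinct monomials in $z,\overline z$ are linearly independent real-analytic functions. Each $C^i_{P,Q}$ is a maximal minor of the $2n\times 2k$ matrix with columns $e_{\,\cdot\,,P_1},\dots,e_{\,\cdot\,,P_p},\overline{e}_{\,\cdot\,,Q_1},\dots,\overline{e}_{\,\cdot\,,Q_q}$, and its diagonal expansion term $\prod_{l=1}^{p}e_{i_l,P_l}\cdot\prod_{l=1}^{q}\overline{e}_{i_{p+l},Q_l}$ is a monomial using each selected row exactly once. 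Such a monomial can appear only in the minor indexed by $i$, so the minors have pairwise-distinct distinguished monomials with nonzero coefficients; hence $\sum_i a_iC^i_{P,Q}\not\equiv 0$ whenever $\{a_i\}\neq 0$, which is exactly what the null-set argument requires.

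The main obstacle I anticipate is entirely bookkeeping rather than analytic: confirming that an admissible $(P,Q)$ with all the required inequalities exists for every $k<n$, and ensuring the distinct-monomial argument survives the presence of conjugated columns — which it does, since the $e$- and $\overline e$-variables are formally independent and the holomorphic and antiholomorphic column blocks never share a variable. Beyond this, the only analytic input is the elementary fact that the non-vanishing locus of a nontrivial real-analytic function on an open set is conull and comeager.
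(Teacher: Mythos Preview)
Your proposal is correct and follows essentially the same route as the paper: both reduce to showing that for each $0<k<n$ there is an admissible $(P,Q)$ for which the polynomials $\{C^i_{P,Q}\}_{i\in I_k}$ are linearly independent, then cover the bad set by a countable union of proper real-analytic varieties. Your choice $p=\min(2k,n)$, $q=2k-p$, $P=(1,\dots,p)$, $Q=(1,\dots,q)$ coincides with the paper's two-case split ($2k\le n$ with $Q$ empty, $2k>n$ with $P=(1,\dots,n)$), and your distinguished-monomial argument is the same diagonal-term observation the paper uses---indeed your formulation (the monomial determines the row set, hence occurs in exactly one minor) is slightly cleaner than the paper's ``some variable of $C^i$ is absent from $C^j$'' phrasing.
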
 
We will first clarify what we mean by generic.  Consider the fact that the space of lattices is equal to the space $\mathrm{Gl}(2n,\mathbb R)/\mathrm{Gl}(2n,\mathbb Z)$. We say that a property is generic for lattices if there exists an $S\subsetneq \mathrm{Gl}(2n,\mathbb R)$,  which is a set of Lebesgue measure zero in $\mathbb R^{4n^2}$ such that the property is fulfilled by all lattices in $(\mathrm{Gl}(2n,\mathbb R)\setminus S)/\mathrm{Gl}(2n,\mathbb Z)$.
\begin{proof}
Consider any set of integers $\{a_i\}_{i\in I_k}$. Then the equations
$$
\sum\limits_{i\in I_k}a_iC^i_{P,Q}=0
$$
induce a closed variety in $\mathbb R^{4n^2}.$ It is sufficient to prove that a variety of this form has codimension of at least 1 for some given $(P,Q)$, as this will imply that the variety in this case is of Lebesgue measure zero , and the set in $\mathbb R^{4n^2}$ which does not induce a transcendental torus must be contained within this union. This is a simple observation to make since all we must show is that there is a point which does not lie in this subvariety. 

\smallskip

To see this,  observe in the case $2k\leq n$,  and choose $P$ to be of length $2k$ and $Q$ an empty multiindex. Then the polynomials $C^i_{P,Q}$ are linearly independent over $\mathbb R[e_{1,1},...,e_{2n,n}]$. 

\smallskip

We see this by looking at the "first term" of the $C^i_{P,Q}.$ By this, we mean the summand in the determinant which is given by multiplying the elements on the main diagonal.  So this term will be of the form
$$
e_{i_1,P_1}\cdot...\cdot e_{i_{2k},P_{2k}}.
$$
If we now choose a different index $j\in I_k$. Without loss of generality we may assume that the number $i_1$ does not occur in $j$.  This means that the variable $e_{i_1P_1}$ also cannot occur in $C^j_{P,Q}$, making $C^j_{P,Q}$ linearly independent from $C^i_{P,Q}$. Thus, nontrivial linear combinations of the $C^i_{P,Q}$ cannot vanish identically. 

\smallskip

Now observe the case where $2k>n$. We now take $P=(1,2,...n)$, and $Q$ to be of length $2k-n$. We need to check that the corresponding polynomials $C^i_{P,Q}$ are linearly independent over $\mathbb R[e_{1,1}, \bar e_{1,1}, ..,e_{2n,n}, \bar e_{2n, n}]$. 

\smallskip

Once again, this follows from that the term given by multiplying the elements of the diagonal, $e_{i_1,1}\cdot...\cdot e_{i_n,n}\cdot \overline{e}_{i_{n+1},Q_1}\cdot...\cdot \overline{e}_{i_{2k},Q_{2k-n}}
$
is unique with respect to $i$. 

\end{proof}

\end{document}